\theoremstyle{plain}
\newtheorem{thm}{Theorem}[section]
\newtheorem{prop}[thm]{Proposition}
\newtheorem{cor}[thm]{Corollary}
\theoremstyle{definition}
\theoremstyle{remark}
\newtheorem{rmk}[thm]{Remark}
\def\Z{{\mathbb Z}}
\def\C{{\mathbb C}}
\def\R{{\mathbb R}}
\def\Q{{\mathbb Q}}
\def\P{{\mathbb P}}
\def\O{\mathcal{O}}
\def\a{\alpha}
\def\ep{\epsilon}
\def\l{\lambda}
\def\r{\rho}
\def\D{\Delta}
\def\.{\cdot}
\def\~{\widetilde}
\def\^{\widehat}
\def\o{\circ}
\def\ov{\overline}
\def\rat{\dashrightarrow}
\renewcommand{\and}{ \ \, \text{and} \ \, }
\DeclareMathOperator{\cont} {cont}
\DeclareMathOperator{\NE} {NE}
\DeclareMathOperator{\NM} {NM}
\DeclareMathOperator{\CNE} {\ov{\NE}}
\DeclareMathOperator{\CNM} {\ov{\NM}}
\DeclareMathOperator{\PEff} {PEff}
\DeclareMathOperator{\Pic} {Pic}
\DeclareMathOperator{\Nef} {Nef}
\DeclareMathOperator{\Cl} {Cl}
\DeclareMathOperator{\Mov} {Mov}
\title{Rigidity properties of Fano varieties}
\author{Tommaso de Fernex}
\address{Department of Mathematics, University of Utah, 155 South 1400 East,
Salt Lake City, UT 48112-0090, USA}
\email{defernex@math.utah.edu}
\author{Christopher D. Hacon}
\address{Department of Mathematics, University of Utah, 155 South 1400 East,
Salt Lake City, UT 48112-0090, USA}
\email{hacon@math.utah.edu}
\thanks{The first author was partially supported by NSF CAREER Grant DMS-0847059.
The second author was supported by NSF Grant 0757897.}
\thanks{Version of October 30, 2009}
\subjclass[2000]{Primary 14J45; Secondary 14E22, 14D15}
\keywords{Fano varieties, Mori theory}
\begin{document}

\begin{abstract}
We overview some recent results on Fano varieties giving evidence
of their rigid nature under small deformations.
\end{abstract}

\maketitle

\section{Introduction}

From the point of view of the Minimal Model Program, Fano varieties
constitute the building blocks of uniruled varieties.
Important information on the biregular and birational geometry of a Fano variety
is encoded, via Mori theory,
in certain combinatorial data corresponding to the N\'eron--Severi
space of the variety.
It turns out that, even when there is actual variation in moduli,
much of such combinatorial data remains unaltered,
provided that the singularities are ``mild'' in an appropriate sense.
One should regard any statement of this sort as a rigidity property
of Fano varieties.

This paper gives an overview of Fano varieties, recalling
some of their most important properties and discussing
their rigid nature under small deformations.
We will keep a colloquial tone, referring the reader to the appropriate
references for many of the proofs. Our main purpose is indeed to give a broad
overview of some of the interesting features of this special class of varieties.
Throughout the paper, we work over the complex numbers.

\section{General properties of Fano varieties}

A {\it Fano manifold} is a projective manifold $X$ whose anticanonical
line bundle $-K_X := \wedge^n T_X$ is ample (here $n = \dim X$).

The simplest examples of a Fano manifolds are given by the projective spaces
$\P^n$. In this case, in fact, even the tangent space is ample (by a result of Mori,
we know that projective spaces
are the only manifolds with this property, cf. \cite{Mo79}).

In dimension two, Fano manifolds are known as Del Pezzo surfaces.
This class of surfaces has been widely studied in the literature
(it suffices to mention that several books have been written just on
cubic surfaces), and their geometry is quite well understood.
There are nine families of Del Pezzo surfaces.
The following theorem, obtained as a result of a
series of papers \cite{Nad90,Nad91,Cam91,Cam,KMM92a,KMM92},
shows that this is a general phenomenon.

\begin{thm}
For every $n$, there are only finitely many families
of Fano manifolds of dimension $n$.
\end{thm}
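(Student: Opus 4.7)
The plan is to establish boundedness: to show that all Fano $n$-folds form a bounded family, i.e., arise as the fibers of finitely many flat families over schemes of finite type. Once this is achieved, each irreducible family contributes one deformation class, so there are only finitely many of these. The standard route is to embed every Fano $n$-fold $X$ into a fixed projective space $\P^N$ via $|-mK_X|$ for a uniform $m=m(n)$ and with one of finitely many Hilbert polynomials; then the desired families appear as components of a Hilbert scheme of $\P^N$.

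The reduction to a uniform $m$ goes through Matsusaka's big theorem (in effective form, after Siu): if one can find a constant $V=V(n)$ such that $(-K_X)^n \leq V$ for every smooth Fano $n$-fold $X$, then the ampleness of $-K_X$ together with the uniform volume bound produces a uniform $m=m(n)$ for which $-mK_X$ is very ample. The degree of the resulting embedding, $m^n \cdot (-K_X)^n$, is then uniformly bounded, and so is $N = h^0(-mK_X)-1$ via Riemann--Roch together with Kodaira vanishing, yielding the Hilbert-scheme reduction.

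The essential step is therefore the volume bound $(-K_X)^n \leq V(n)$. The key input is Mori's bend-and-break, which guarantees on every Fano $n$-fold a rational curve $C$ with $0 < -K_X \cdot C \leq n+1$, together with the fact (Campana; Koll\'ar--Miyaoka--Mori) that Fano manifolds are rationally connected. These two facts combine to produce, through a general point of $X$, a very free rational curve whose anticanonical degree is bounded in terms of $n$. An inductive argument, smoothing combs of such curves to construct higher-dimensional rational subvarieties that sweep out $X$ while controlling the anticanonical degree at each step, then produces the desired bound on $(-K_X)^n$.

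The main obstacle is this last step: controlling the growth of the anticanonical degree throughout the inductive construction. Bend-and-break yields only a single low-degree rational curve, and propagating this to a bound on the top self-intersection $(-K_X)^n$ requires careful deformation theory of stable maps and a fine analysis of how free curves assemble into very free ones as one increases the dimension of the sweeping family. Once this bound is in hand, the reductions via Matsusaka's theorem and the Hilbert scheme are essentially formal.
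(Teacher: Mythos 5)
The paper gives no proof of this statement, citing Nadel, Campana, and Koll\'ar--Miyaoka--Mori; your outline --- bend-and-break and rational connectedness yielding a uniform bound on $(-K_X)^n$, then Matsusaka's big theorem and the Hilbert scheme --- is precisely the strategy of those references, so you are following essentially the same (cited) approach. The one caveat is that the step you flag as the main obstacle, the volume bound, is the actual content of \cite{KMM92}, where it is obtained not by an induction on very free curves but from the statement that two general points of $X$ can be joined by a single irreducible rational curve of anticanonical degree at most a constant $d(n)$, combined with an elementary intersection-theoretic lemma giving $(-K_X)^n \leq d(n)^n$.
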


This theorem is based on the analysis of rational curves on Fano manifolds.
In this direction, we should also mention the following important result of
Campana and Koll\'ar--Miyaoka--Mori (cf. \cite{Cam, KMM92}).

\begin{thm}\label{t-rc}
Fano manifolds are rationally connected.
\end{thm}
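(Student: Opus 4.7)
My plan is to reduce rational connectedness to the existence of a \emph{very free} rational curve through a general point, and then to build such a curve by assembling free rational curves produced by Mori's bend-and-break.

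The first step is to establish uniruledness via Mori's bend-and-break. Since $-K_X$ is ample, for any curve $C \subset X$ one has $-K_X \cdot C > 0$. Working in characteristic $p$ and composing with a sufficiently high iterate of Frobenius, one forces a one-parameter family of deformations of $C$ through a fixed point $x$ to degenerate, producing a rational curve through $x$. A standard spreading-out argument then gives rational curves through every point of $X$ in characteristic zero as well. A bit more care in the bend-and-break yields a rational curve $f\colon \P^1 \to X$ through a general point $x$ which is \emph{free}, meaning $f^*T_X$ is nef on $\P^1$, with degree bounded in terms of $n = \dim X$.

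The heart of the argument is to upgrade ``free'' to ``very free'', i.e.\ to produce a rational curve $g\colon \P^1 \to X$ with $g^*T_X \cong \bigoplus \OO(a_i)$ and every $a_i \geq 1$. The plan is a smoothing/gluing construction in the space of stable maps (``combs''): take a free rational curve $f_0$ through a general point and attach to it a large number of free rational curves $f_1,\dots,f_N$ meeting $f_0$ transversally at general points. A Hilbert-scheme/deformation-theoretic computation shows that the resulting reducible curve deforms to a smooth rational curve $g$, and that when $N$ is large enough, the positivity contributed by the teeth $f_i$ forces every summand of $g^*T_X$ to have strictly positive degree. This is where the ampleness of $-K_X$ is used quantitatively: it guarantees enough positivity on each $f_i^*T_X$ for the combinatorial count to close. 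Once we have a very free curve through a general point $x$, a standard argument (deforming one endpoint of a very free curve while keeping the other fixed at $x$ and showing the evaluation map dominates $X$) proves that two general points of $X$ can be joined by a chain of rational curves, i.e.\ $X$ is rationally connected.

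The main obstacle I expect is not the production of some rational curve through a point (which bend-and-break handles cleanly) but rather the comb-smoothing step: one must simultaneously control the deformation theory of a reducible nodal curve and ensure that the smoothed curve is genuinely very free. This requires the careful cohomological bookkeeping on $\P^1$ (using that on a rational curve every vector bundle splits as a direct sum of line bundles) together with a transversality argument to place the attaching points in sufficiently general position. The rest of the proof is essentially formal once this step is in place.
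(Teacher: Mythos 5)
The paper states this theorem without proof, attributing it to Campana and to Koll\'ar--Miyaoka--Mori, so I am comparing your outline with the arguments of \cite{Cam,KMM92}. Your first and last steps are sound: Mori's characteristic-$p$ bend-and-break does produce rational curves of bounded anticanonical degree through every point, a general member of a dominating family of rational curves is free in characteristic zero, and the existence of a very free curve does imply rational connectedness for a smooth projective variety. The genuine gap is in the middle step. Attaching free teeth to a free handle and smoothing does not by itself yield a very free curve: for the smoothed comb to be very free, the tangent directions of the teeth at the attachment points must, together with the already-positive summands of $f_0^*T_X$, span the whole tangent space, and neither freeness of the teeth nor $-K_X\cdot f_i>0$ gives any control on this. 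The mechanism you invoke (``ampleness of $-K_X$ guarantees enough positivity on each $f_i^*T_X$ for the combinatorial count to close'') is satisfied verbatim on $\P^1\times E$ with $E$ elliptic, where every free rational curve $f$ is a fiber of the projection to $E$ and has $f^*T_X\cong \O(2)\oplus\O$ with $-K_X\cdot f=2>0$; yet no comb of such curves smooths to a very free curve and the variety is not rationally connected. Of course $\P^1\times E$ is not Fano, but it shows that your stated use of ampleness does not distinguish the Fano case from a uniruled, non--rationally-connected one, so it cannot close the argument. The ``transversality argument to place the attaching points in general position'' that you defer to does not exist at this stage: a priori all free curves through a general point could be tangent to a proper foliation.

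What is missing is essentially the content of the theorem, namely that the free rational curves through a general point of a Fano manifold point in all directions --- equivalently, that $X$ is rationally chain connected. This is where \cite{Cam,KMM92} do the real work: they construct the rational quotient (MRC fibration) $X\dashrightarrow Z$ associated to the chain-connectedness equivalence relation and show $\dim Z=0$ by applying bend-and-break to a curve that is horizontal over $Z$ and has positive $-K_X$-degree (this is where ampleness of $-K_X$ actually enters), producing a rational curve through a general point that is not contained in a fiber and thereby contradicting the defining property of the quotient. Once rational chain connectedness by free curves is established, the comb-smoothing you describe does go through --- the handle is taken to be a chain joining two general points, which is exactly what guarantees the spanning condition --- and yields very free curves. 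So your outline is recoverable, but only after inserting the rational-quotient/chain-connectedness step, which is the heart of the cited proofs and is absent from your plan.
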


Fano varieties arise naturally in the context of the Minimal Model Program.
This however leads us to work with possibly singular varieties.
The smallest class of singularities that one has to allow
is that of normal $\Q$-factorial varieties with terminal singularities.
However, one can enlarge the class of singularities further, and work
with normal $\Q$-Gorenstein varieties with log terminal (or, in some cases, even
log canonical) singularities.
In either case, one needs to consider $-K_X$ as a Weil divisor.
The hypothesis guarantees that some positive multiple $-mK_X$ is Cartier
(i.e. $\mathcal O _X(-mK_X)$ is a line bundle), so that one can
impose the condition of ampleness.

For us, a {\it Fano variety} will be a normal variety with $\Q$-Gorenstein log terminal
singularities such that $-K_X$ is ample. We will however be mostly interested
in the case where the singularities are $\Q$-factorial and terminal.

The above results are however more delicate in the singular case.
By a recent result of Zhang (cf. \cite{Z06}), it is known that Fano varieties
are rationally connected (see \cite{HM07} for a related statement).
Boundedness of Fano varieties is instead an open problem.
The example of a cone over a rational curve of degree $d$ shows that even for
surfaces, we must make some additional assumptions. In this example
one has that the minimal log discrepancies are given by $1/d$.
One may hope that if we bound the minimal log discrepancies away from $0$
the boundedness still holds. More precisely,
the BAB conjecture (due to Alexeev--Borisov--Borisov)
states that for every $n >0$ and any $\ep > 0$,
there are only finitely many families
of Fano varieties of dimension $n$ with $\ep$-log terminal singularities
(in particular, according to this conjecture, for every $n$ there are only finitely many
families of Fano varieties with canonical singularities).

Note that, by Theorem~\ref{t-rc},
it follows that Fano manifolds have the same cohomological invariants as
rational varieties (namely $h^i(\mathcal O _X)=h^{0}(\Omega ^q _X)=0$ for all $i,q>0$).
On the other hand, by celebrated results of Clemens--Griffiths and Iskovskikh--Manin,
it is known that there are
examples of Fano manifolds that are nonrational
(cf. \cite{IM71, CG72}). The search
for these examples was motivated
by the L\"uroth Problem. Note that it is still an open problem
to find examples of Fano manifolds that are not unirational.

Perhaps the most important result known to hold for Fano varieties
(for mild singularities and independently of their dimension), concerns the
combinatorial structure associated to the cone of effective curves.
The first instance of this
was discovered by Mori (cf. \cite{Mo82}) in the
smooth case. It is a particular case of the Cone Theorem (which holds for all
varieties with log terminal singularities).

\begin{thm}[Cone Theorem for Fano varieties]
The Mori cone of a Fano variety is rational polyhedral, generated
by classes of rational curves.
\end{thm}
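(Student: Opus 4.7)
The plan is to derive the statement as an immediate specialization of the general Cone Theorem, using ampleness of $-K_X$ to eliminate the $K_X$-nonnegative part of $\ov{\NE}(X)$.

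First, I would invoke the Cone Theorem in its full klt generality: for any normal projective variety $X$ with $\Q$-Gorenstein log terminal singularities, one has a decomposition
$$\ov{\NE}(X) = \ov{\NE}(X)_{K_X \geq 0} + \sum_i \R_{\geq 0}[C_i],$$
where the $C_i$ are rational curves with $0 < -K_X \cdot C_i \leq 2\dim X$, and the rays $R_i := \R_{\geq 0}[C_i]$ are locally discrete in the open halfspace $\{K_X < 0\}$ (in particular, they can only accumulate toward the hyperplane $\{K_X = 0\}$). This is the deep input, established via Mori's bend-and-break together with Kawamata–Viehweg vanishing and the base-point-free theorem in the singular setting.

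Second, since $-K_X$ is ample, Kleiman's ampleness criterion yields $-K_X \cdot \gamma > 0$ for every nonzero $\gamma \in \ov{\NE}(X)$. Consequently $\ov{\NE}(X)_{K_X \geq 0} = \{0\}$, and the above decomposition reduces to $\ov{\NE}(X) = \sum_i R_i$, with each $R_i$ generated by a rational curve.

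Third, I would verify finiteness. If the collection $\{R_i\}$ were infinite, then, because $\ov{\NE}(X)$ is a closed cone in a finite-dimensional vector space, the rays would have at least one accumulation ray $R_\infty \subset \ov{\NE}(X)$. By the locally discrete property from the Cone Theorem, $R_\infty$ must lie in $\{K_X = 0\}$, but this intersects $\ov{\NE}(X)$ only at the origin by the previous step, a contradiction. Hence only finitely many $R_i$ occur, and $\ov{\NE}(X)$ is rational polyhedral, generated by classes of rational curves.

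The genuinely hard part is black-boxed into the general Cone Theorem for klt varieties; the Fano specialization is essentially the observation that ampleness of $-K_X$ makes the $K_X$-nonnegative half-cone trivial and prevents any accumulation of extremal rays. Once the klt Cone Theorem is at hand, the argument above is formal.
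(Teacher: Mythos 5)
Your proof is correct and follows essentially the same route the paper sketches in Section~3: quote the general Cone Theorem for log terminal varieties, note that ampleness of $-K_X$ (via Kleiman's criterion) kills the $K_X$-nonnegative part, and use the fact that extremal rays can only accumulate along $\{K_X\cdot\alpha=0\}$ to conclude finiteness. Nothing is missing; the extra care you take with the accumulation-ray argument is a fine expansion of what the paper leaves implicit.
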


Naturally one may also ask if there are similar results concerning the
structure of other cones of curves.
From a dual perspective, one would like to understand the structure of the various
cones of divisors on a Fano variety.
The strongest result along these lines was conjectured
by Hu--Keel in~\cite{HK} and recently proved by
Birkar--Cascini--Hacon--M$^{\rm c}$Kernan in \cite{BCHM}:

\begin{thm}\label{t-MDS}
Fano varieties are Mori Dream Spaces in the sense of Hu--Keel.
\end{thm}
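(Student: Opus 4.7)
The plan is to verify the three defining conditions of a Mori Dream Space as formulated by Hu and Keel: (a) $X$ is $\Q$-factorial with $\Pic(X)_\Q \cong N^1(X)_\Q$ finite-dimensional; (b) the nef cone $\Nef(X)$ is rational polyhedral and generated by semi-ample classes; and (c) there exist finitely many small $\Q$-factorial modifications $\f_i : X \dashrightarrow X_i$, each satisfying the analogues of (a) and (b), such that $\Mov(X) = \bigcup_i \f_i^* \Nef(X_i)$.

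Condition (a) is essentially free: $\Q$-factoriality is part of our standing hypothesis on $X$, and the rational connectedness of klt Fano varieties (Theorem~\ref{t-rc} together with its extension by Zhang) forces $h^1(\OO_X) = 0$, hence $\Pic(X)_\Q \cong N^1(X)_\Q$. For (b), the Cone Theorem stated above implies that $\NE(X)$, and therefore by duality $\Nef(X)$, is rational polyhedral. If $D$ is any nef $\Q$-divisor on $X$, then $A := D - \ep K_X$ is ample for small $\ep > 0$ because $-K_X$ is ample; writing $D \sim_\Q K_X + A$ and applying the Base Point Free Theorem to the klt pair $(X,0)$ with ample perturbation $A$ shows that a suitable multiple of $D$ is base point free. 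Thus every nef class is semi-ample, and (b) is established.

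The substantive content is condition (c), for which the essential input is the machinery of \cite{BCHM}: existence and termination of the MMP with scaling for klt pairs with big boundary, together with finiteness of minimal models as the boundary varies in a rational polytope. Given a class $[D]$ in the interior of $\Mov(X)$, I would use the ampleness of $-K_X$ to scale and perturb so that $D \sim_\Q K_X + \D$ for some $\D$ with $(X,\D)$ klt and $\D$ big, thus placing us inside the scope of \cite{BCHM}. Running a $(K_X + \D)$-MMP with scaling yields, after finitely many steps, a minimal model $\f_D : X \dashrightarrow X_D$ on which the pushforward of $D$ is nef, and hence semi-ample by the Base Point Free Theorem applied on $X_D$. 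Since $[D] \in \Mov(X)$, no divisorial contractions can occur along the way, so $\f_D$ is an SQM.

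The hard part is the finiteness assertion in (c): as $[D]$ varies over $\Mov(X)$, one must bound the number of distinct birational models $X_D$ that arise. For this, I would fix a rational polytope $P$ of big boundaries such that the associated log canonical classes $\{K_X + \D : \D \in P\}$ cover $\Mov(X)$ up to positive scaling, and invoke the finiteness-of-log-minimal-models theorem of \cite{BCHM} over $P$. This produces finitely many SQMs $\f_i : X \dashrightarrow X_i$ whose pulled-back nef cones tile $\Mov(X)$, completing the verification of (c). It is precisely the Fano hypothesis—ensuring that every class in $\Mov(X)$ is realized by a log canonical divisor of a klt pair with big boundary—that allows the entire argument to fit inside the framework of \cite{BCHM}.
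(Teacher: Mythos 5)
The paper does not actually prove this theorem---it is a survey statement quoted from \cite{BCHM} (Corollary 1.3.2 there)---and your sketch faithfully reconstructs the argument of that reference: the Fano hypothesis lets you write any divisor class, up to positive scaling, as $K_X+\Delta$ with $(X,\Delta)$ klt and $\Delta$ big, after which the Cone Theorem, the Base Point Free Theorem, and the finiteness of ample/log terminal models over a rational polytope of boundaries from \cite{BCHM} deliver conditions (a), (b), (c) of Hu--Keel respectively. One small slip in (b): you set $A := D - \ep K_X$ and then write $D \sim_\Q K_X + A$, which would require $\ep = 1$; the intended decomposition is $A = D - K_X$, which is ample because $D$ is nef and $-K_X$ is ample (equivalently, apply base point freeness in the form ``$aD - K_X$ nef and big for some $a>0$ implies $D$ semi-ample'').
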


The meaning and impact of these results will be discussed in the next section.

\section{Mori theoretic point of view}\label{sect:Mori}

Let $X$ be a normal projective variety and consider the dual $\R$-vector spaces
$$
N_1(X) := (Z_1(X)/\equiv)\otimes \R
\ \and\
N^1(X) := (\Pic(X)/\equiv)\otimes\R,
$$
where $\equiv$ denotes numerical equivalence.
The {\it Mori cone} of $X$ is the closure $\CNE(X) \subset N_1(X)$
of the cone spanned by classes of effective curves. Its dual cone
is the {\it nef cone} $\Nef(X) \subset N^1(X)$, which by Kleiman's criterion
is the closure of the cone spanned by ample classes.
The closure of the cone spanned by effective classes in $N^1(X)$
is the {\it pseudo-effective cone} $\PEff(X)$. Sitting in between
the nef cone and the pseudo-effective cone is the {\it movable cone
of divisors}
$\Mov(X)$, given by the closure of the cone spanned by classes of divisors
moving in a linear system with no fixed components.
All of these cones
$$\Nef(X) \subset \Mov(X)\subset\PEff(X)\subset N_1(X)$$
carry important geometric information about the variety $X$.

The Cone Theorem says that $\CNE(X)$ is generated
by the set of its $K_X$ positive classes
$\CNE(X)_{{K_X}_{\geq 0}}=\{\a \in \CNE(X)| K_X\. \a \geq 0\}$
and at most countably many $K_X$ negative rational
curves $C_i\subset X$ of  bounded anti-canonical degree
$0< - K_X\cdot C_i\leq 2\dim (X)$.
In particular the only accumulation points for the curve classes
$[C_i]\in \CNE (X)$ are along the hyperplane determined by $K_X\cdot \a =0$.
In particular, for a Fano variety, the Mori cone $\CNE (X)$ is
a rational polyhedral cone.
By duality, it follows that the nef cone $\Nef(X)=(\CNE (X))^\vee$
is also a rational polyhedral cone.

The geometry of $X$ is reflected to a large extent in the combinatorial
properties of $\CNE (X)$.
Every extremal face $F$ of $\CNE(X)$ corresponds to a surjective morphism
$\cont_F \colon X \to Y$, which is called a {\it Mori contraction}.
The morphism $\cont_F $  contracts precisely those curves on $X$
with class in $F$. Conversely, any morphism with connected fibers onto
a normal variety arises in this way.

\begin{rmk}
When $X$ is not a Fano variety,
$\CNE (X)_{{K_X}_{<0}}$ may fail to be finitely generated, and even in
very explicit
examples such as blow-ups of $\P^2$, the structure of the $K_X$ positive part of
the Mori cone is in general unknown.
Consider, for example, the long-standing
open conjectures of Nagata and Segre--Harbourne--Gimigliano--Hirschowitz.
\end{rmk}

A similar behavior, that we will now describe,
also occurs for the cone of nef curves.
By definition the cone of nef curves $\CNM(X)\subset N_1(X)$ is the closure of
the cone
generated by curves belonging to a covering family (i.e. to a family of curves
 that dominates the variety $X$). It is clear that if $\a\in \CNM (X)$ and
$D$ is an effective Cartier divisor on $X$, then
$\a\cdot D\geq 0$. It follows that $\a\cdot D\geq 0$ for any pseudo-effective
divisor $D$ on $X$. The remarkable result of \cite{BDPP} is the following.

\begin{thm}
The cone of nef curves is dual to the cone of pseudo-effective
divisors, i.e. $ \CNM (X)=\PEff(X)^\vee $.
\end{thm}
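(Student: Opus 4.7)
The inclusion $\CNM(X) \subseteq \PEff(X)^\vee$ is elementary and was essentially noted in the paragraph preceding the statement. If $[C] \in N_1(X)$ is represented by a curve belonging to a covering family $\{C_t\}_{t \in T}$, then for every effective Cartier divisor $D$ one may choose $t \in T$ so that $C_t \not\subset \Supp(D)$; the resulting proper intersection gives $D \cdot C = D \cdot C_t \geq 0$. By $\R$-linearity and passage to the limit in both factors, this yields $\beta \cdot D \geq 0$ for every $\beta \in \CNM(X)$ and every $D \in \PEff(X)$.

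For the reverse inclusion $\PEff(X)^\vee \subseteq \CNM(X)$, I would argue by contradiction. Suppose $\alpha \in \PEff(X)^\vee$ but $\alpha \notin \CNM(X)$. Since $\CNM(X)$ is a closed convex cone in the finite-dimensional vector space $N_1(X)$, the Hahn--Banach theorem yields a class $D \in N^1(X)$ with $D \cdot \alpha < 0$ and $D \cdot \beta \geq 0$ for every $\beta \in \CNM(X)$. The proof then reduces to the dual characterization
\[
(\ast) \qquad D \in N^1(X) \text{ is pseudo-effective} \iff D \cdot C \geq 0 \text{ for every movable class } [C],
\]
since $(\ast)$ would force $D \in \PEff(X)$, contradicting $D \cdot \alpha < 0$.

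The substance of the theorem is precisely $(\ast)$, and establishing the non-trivial implication is the main obstacle. The strategy, due to Boucksom--Demailly--Paun--Peternell, is to argue the contrapositive: assuming $D$ is not pseudo-effective, one constructs a covering family of curves on which $D$ has negative degree. Fixing an ample class $A$, set $t_0 := \inf\{t > 0 : D + tA \in \PEff(X)\}$; by hypothesis $t_0 > 0$ and $D_0 := D + t_0 A$ lies on the boundary of $\PEff(X)$. The aim is to produce a covering family $\{C_s\}$ with $D_0 \cdot C_s = 0$, for then $D \cdot C_s = -t_0\, A \cdot C_s < 0$, as required. This extraction relies on complex-analytic machinery: one represents classes near $D_0$ by positive $(1,1)$-currents, applies a Siu-type decomposition to separate the absolutely continuous and singular parts, and concentrates mass along covering families of curves tangent to $\partial \PEff(X)$. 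The delicate point---and the deep content of BDPP---is guaranteeing that the family so produced is genuinely covering, rather than confined to a proper subvariety.
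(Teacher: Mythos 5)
The paper does not prove this statement: it is quoted directly from Boucksom--Demailly--Paun--Peternell \cite{BDPP}, and the only argument the paper supplies is the elementary inclusion $\CNM(X) \subseteq \PEff(X)^\vee$ in the preceding paragraph, which your first paragraph reproduces correctly. So there is no ``paper proof'' to compare against; the question is whether your sketch of the reverse inclusion is a faithful and complete outline of the BDPP argument.

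Your reduction is right: by the duality of closed convex cones in $N_1(X)$, the statement is equivalent to $(\ast)$, i.e.\ to showing that a class $D$ nonnegative against all movable curve classes is pseudo-effective, and the contrapositive via the boundary class $D_0 = D + t_0 A$ is indeed how one sets it up. But the sketch has a genuine gap at the decisive step, and it slightly mislocates the difficulty. In BDPP the covering families are not ``extracted from currents tangent to $\partial\PEff(X)$''; they are constructed explicitly as pushforwards $\mu_*(\widetilde{A}_1 \cdots \widetilde{A}_{n-1})$ of complete intersections of ample divisors on a modification $\mu \colon \widetilde{X} \to X$, so the covering property --- which you flag as the delicate point --- is essentially automatic. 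The real content, which your sketch leaves as a black box, is the \emph{orthogonality estimate} for approximate Zariski decompositions: writing $\mu_\ep^*(D_0 + \ep\om) = A_\ep + E_\ep$ with $A_\ep$ ample and $E_\ep$ effective, one must bound $(A_\ep^{\,n-1} \cdot E_\ep)^2$ in terms of the volume, so that the movable classes $\g_\ep = (\mu_\ep)_*(A_\ep^{\,n-1})$ satisfy $D_0 \cdot \g_\ep \to 0$ as $\ep \to 0$. Without that estimate there is no reason the curves you produce pair to zero against $D_0$ rather than to something positive, and the contradiction $D \cdot \g = -t_0 A \cdot \g < 0$ does not follow. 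A complete proof would also need to reconcile the paper's definition of $\CNM(X)$ (closure of the cone of curves in covering families) with the BDPP movable cone (pushforwards of intersections of ample classes); these generate the same closed cone, but that identification is itself a statement proved in \cite{BDPP}, not a formality.
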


We now turn our attention to the case of $\mathbb Q$-factorial
Fano varieties. In this case, the cone of nef curves $\CNM (X)$
is also rational polyhedral and every extremal ray corresponds
to a {\it Mori fiber space} $X' \to Y'$ on a model $X'$
birational to $X$. More precisely we have
the following result (cf. \cite[1.3.5]{BCHM}).

\begin{thm}
$R$ is an extremal ray of $\CNM (X)$ if and only if there exists a
$\mathbb Q$-divisor $D$ such that $(X,D)$ is Kawamata log
terminal, and a $(K_X+D)$ Minimal Model Program $X\dasharrow X'$
ending with a Mori fiber space $X'\to Y'$, such that
the numerical transform of any curve in the fibers of $X' \to Y'$
(e.g., the proper transform of a general complete intersection
curve on a general fiber of $X' \to Y'$) has class in $R$.
\end{thm}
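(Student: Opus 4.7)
\bigskip

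\textbf{Proof plan.} The proof hinges on the fact that $X$ is a Mori Dream Space (Theorem~\ref{t-MDS}), together with the duality $\CNM(X)=\PEff(X)^\vee$ recalled above. Both of these give $\PEff(X)$ a rational polyhedral structure with a finite Mori chamber decomposition, where each chamber corresponds to a small $\Q$-factorial birational model and each codimension-one face on the boundary corresponds, on some such model, to a Mori fiber space contraction.

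\emph{The direction ($\Leftarrow$):} Assume we are given $D$ and a $(K_X+D)$-MMP ending with a Mori fiber space $f\colon X'\to Y'$. The numerical class $[C']$ of a curve in a general fiber of $f$ spans an extremal ray of $\CNM(X')$ by the standard theory of extremal contractions. The birational map $\f\colon X\dasharrow X'$ produced by the MMP is an isomorphism in codimension one, so pullback of divisors under $\f$ identifies $N^1(X)$ with $N^1(X')$ and preserves both pseudo-effectivity and the Mori chamber decomposition. Dualizing, the numerical transform of $[C']$ to $N_1(X)$ lies on an extremal ray $R$ of $\CNM(X)$, which will be the required ray.

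\emph{The direction ($\Rightarrow$):} This is the substantive content. Let $R\subset \CNM(X)$ be an extremal ray. By the BDPP duality, $R^{\perp}\cap\PEff(X)$ is a facet $F$ of $\PEff(X)$. The plan is:
\begin{enumerate}
\item Choose a divisor class $[H]$ in the relative interior of $F$, so that $H\cdot R=0$ while $H\cdot R'>0$ for every other extremal ray $R'$ of $\CNM(X)$.
\item Using the MDS structure, $F$ lies in the closure of some Mori chamber $\CC$ corresponding to a small $\Q$-factorial modification $X\dasharrow X'$; the boundary face of $\CC$ meeting $F$ corresponds to a Mori fiber space contraction $X'\to Y'$ whose general fiber class agrees, via the identification of numerical spaces, with a generator of $R$.
\item Since $-K_X$ is ample, we can choose a $\Q$-divisor $D$ with $(X,D)$ klt whose class $[K_X+D]$ lies in the interior of $\CC$ arbitrarily close to the face of $\CC$ containing a neighborhood of the relative interior of $F$. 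Ampleness of $-K_X$ gives ample room to arrange $D$ to be effective and to have small coefficients so that $(X,D)$ is klt.
\item Run the $(K_X+D)$-MMP. By \cite{BCHM} this MMP exists and terminates; by the choice of $[K_X+D]$ inside the chamber $\CC$, it terminates precisely on the model $X'$, and because $[K_X+D]$ lies on the appropriate boundary face of $\PEff(X')$, the output is a Mori fiber space $X'\to Y'$ contracting the fiber class corresponding to $R$.
\end{enumerate}

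\emph{Main obstacle.} The delicate part is step~(3)--(4): one must verify that a $\Q$-divisor $D$ with $(X,D)$ klt can indeed be chosen so that $K_X+D$ lands in the correct chamber adjacent to $F$, and that the MMP directed by this class produces exactly the Mori fiber space picking out $R$ (rather than, e.g., a divisorial contraction or a different fibering contraction). This is where one uses in an essential way the fact that for an MDS the Mori chamber decomposition of $\PEff(X)$ is finite and matches the geometric MMP: once $[K_X+D]$ is placed in the appropriate open chamber, the entire MMP, together with the final contraction, is read off from the chamber structure, and the correspondence with extremal rays of $\CNM(X)$ follows by duality.
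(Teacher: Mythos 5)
The paper does not actually prove this statement (it is quoted from \cite[1.3.5]{BCHM}), so I am comparing your proposal with the standard argument, whose mechanism also appears in the paper's proof of Theorem~\ref{thm:Mfs-over-T}. Your overall strategy --- combine the Mori Dream Space property with the duality $\CNM(X)=\PEff(X)^\vee$ and aim an MMP at the facet $F=R^\perp\cap\PEff(X)$ --- is the right one, but two steps are genuinely wrong. First, you assert in both directions that the map $X\rat X'$ is an isomorphism in codimension one and that the chamber of $\PEff(X)$ adjacent to $F$ corresponds to a small modification. An MMP typically contains divisorial contractions, and the relevant chamber need not meet $\Mov(X)$: already for $X=\Bl_p\P^2$, the extremal ray $\R_{\geq 0}[H]$ of $\CNM(X)$ is realized only by $\Bl_p\P^2\to\P^2\to \mathrm{pt}$, which contracts the exceptional divisor $E$, and the dual facet $\R_{\geq 0}[E]$ of $\PEff(X)$ meets $\Mov(X)$ only at the origin. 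So $N^1(X)\to N^1(X')$ is merely surjective, and the extremality in $\CNM(X)$ of the numerically transformed fiber class (your $\Leftarrow$ direction) is not automatic; it needs the observation that $R^\perp\cap\PEff(X)$ contains the contracted divisors together with transforms of divisors pulled back from $Y'$, and hence spans a hyperplane. You assert this extremality without argument.

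Second, and more seriously, in steps (3)--(4) you place $[K_X+D]$ in the \emph{interior} of a chamber of $\PEff(X)$. For a klt pair with $K_X+D$ big the MMP terminates with a log minimal model, never with a Mori fiber space; indeed your step (4) contradicts step (3) by simultaneously claiming the class lies on a boundary face of the pseudo-effective cone. The correct mechanism is the one visible in the paper's proof of Theorem~\ref{thm:Mfs-over-T}: choose a rational class $\Theta$ general in the relative interior of $F$, use the ampleness of $-K_X$ to write $\Theta\equiv K_X+D+A$ with $A$ ample and $(X,D)$ klt, so that $K_X+D\equiv\Theta-A$ is \emph{not} pseudo-effective, and run the $(K_X+D)$-MMP \emph{with scaling of} $A$. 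The scaling forces the program to track the segment $\Theta+(t-1)A$, which exits $\PEff(X)$ precisely at $\Theta$; by \cite{BCHM} it terminates with a Mori fiber space that is $(K_X+D+A)$-trivial, and since $\Theta$ is general in $F$ the fiber class annihilates all of $F$ and therefore spans $R$. Without the non-pseudo-effectivity of $K_X+D$ and without the scaling there is no reason your MMP ends with a Mori fiber space at all, let alone the one dual to $F$.
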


We will refer to the induced rational map $X \rat Y'$ as
a {\it birational Mori fiber structure} on $X$.
This was first studied by Batyrev \cite{Bat} in dimension three.
The picture in higher dimensions was recently established
by Birkar--Cascini--Hacon--M$^{\rm c}$Kernan for Fano varieties
and, in a more general context, independently by Araujo and Lehmann \cite{Ara, Leh}.
As a side note, even if it is known that the fibers of any Mori fibration
$X' \to Y'$ are covered by rational curves, it still remains an open question
whether the extremal rays of $\CNM(X)$ are spanned by classes of
rational curves.
This is related to a delicate question
on the rational connectivity of the smooth locus of
singular varieties.

The dual point of view (looking at $N^1(X)$
rather than $N_1(X)$), also offers a natural way of
refining the above results.
As mentioned above, if $X$ is a $\mathbb Q$-factorial
Fano variety, then it is a {\it Mori Dream Space} (cf. \cite{HK,BCHM}).
The movable cone $\Mov(X)$ of a {Mori Dream Space}
admits a finite decomposition into rational polyhedral cones,
called {\it Mori chambers}. One of these chambers is the
nef cone of $X$. The other chambers are given by nef cones
of $\mathbb Q$-factorial birational models $X' \sim_{\rm bir} X$ which are isomorphic
to $X$ in codimension one. Note indeed that any such
map gives a canonical isomorphism between $N^1(X)$ and $N^1(X')$.
Wall-crossings between contiguous Mori chambers correspond
to flops (or flips, according to the choice of the log pair structure)
between the corresponding birational models.
We can therefore view the Mori chamber decomposition of $\Mov(X)$
as encoding information not only on the biregular structure of $X$
but on its birational structure as well.

There is a way of recovering all this information from
the total coordinate ring, or Cox ring, of
a Mori Dream Space $X$, via a GIT construction.
For simplicity, we assume that the map $\Pic(X) \to N^1(X)$
is an isomorphism and that the class group of Weil divisors $\Cl(X)$ of $X$
is finitely generated. These properties hold if $X$ is a Fano variety.
The property that $\Pic(X) \cong N^1(X)$
simply follows by the vanishing of $H^i(X,\O_X)$ for $i > 0$.
The finite generation of $\Cl(X)$ is instead a deeper property;
a proof can be found in \cite{Tot}.
Specifically, see Theorem~3.1. in {\it loc.cit.},
which implies that the natural
map $\Cl(X) \to H_{2n-2}(X,\Z)$ is an isomorphism
for any $n$-dimensional Fano variety $X$.

A {\it Cox ring} of $X$ is, as defined in \cite{HK}, a ring of the type
$$
R(L_1,\dots,L_r) := \bigoplus_{m \in \Z^\r} H^0(X,\O_X(m_1L_1+ \dots + m_\r L_\r)),
$$
for any choice of line bundles $L_1,\dots,L_\r$
inducing a basis of $N^1(X)$.
Here $\r = \r(X)$ is the Picard number of $X$, and $m = (m_1,\dots,m_r)$.
We will call the {\it full Cox ring} of $X$ the ring
$$
R(X) := \bigoplus_{[D] \in \Cl(X)} H^0(X,\O_X(D)).
$$
If $X$ is factorial (that is, if the map $\Pic(X) \to \Cl(X)$ is
an isomorphism) and the line bundles $L_i$ induce a basis
of the Picard group, then the two rings coincide.

These rings were first systematically studied by
Cox in \cite{Cox} when $X$ is a toric variety.
If $X$ is a toric variety and $\D$ is the fan of $X$, then
the full Cox ring is the polynomial ring
$$
R(X) = \C[x_\l \mid \l \in \D(1)],
$$
where each $x_\l$ defines a prime toric invariant divisor of $X$.
When $X$ is smooth, this property characterizes toric varieties.
More precisely, Hu--Keel (cf. \cite{HK}) prove the following result.

\begin{thm}
Assume that $X$ is a smooth Mori Dream Space. Then $R(X)$ is
isomorphic to a polynomial ring
if and only if $X$ is a toric variety.
\end{thm}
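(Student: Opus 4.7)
The plan is to prove the two implications separately. The forward direction is immediate from the paragraph preceding the theorem: if $X$ is a smooth toric variety with fan $\Delta$, then by Cox's description $R(X) \cong \C[x_\lambda \mid \lambda \in \Delta(1)]$ is already exhibited as a polynomial ring.

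For the nontrivial direction, the strategy is to exploit the GIT reconstruction of a Mori Dream Space from its Cox ring. Namely, for any smooth Mori Dream Space $X$ one has an identification
$$
X \cong \Spec R(X) \git_L G,
$$
where $G := \Hom(\Cl(X), \C^*)$ is the characteristic torus acting on $\Spec R(X)$ via the $\Cl(X)$-grading, and $L$ is a linearization arising from any ample class on $X$. Since $X$ is smooth it is factorial, so $\Pic(X) = \Cl(X)$ is free abelian of rank $\rho(X)$, and $G$ is a genuine algebraic torus of the same rank.

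Now assume $R(X) \cong \C[x_1, \dots, x_N]$ is a polynomial ring. Then $\Spec R(X) \cong \A^N$, and the $\Cl(X)$-grading assigns to each generator $x_i$ a homogeneous degree $D_i \in \Cl(X)$. The resulting $G$-action on $\A^N$ is diagonal, $t \cdot (a_1, \dots, a_N) = (t(D_1)\,a_1, \dots, t(D_N)\,a_N)$, and because the $D_i$ span $\Cl(X)$ (being the degrees of algebra generators of $R(X)$), this exhibits $G$ as a subtorus of the big torus $(\C^*)^N \subset \A^N$. Consequently $X$ is realized as a GIT quotient of the toric variety $\A^N$ by a subtorus of its defining torus, and such a quotient is itself toric: the residual quotient torus $(\C^*)^N/G$ acts on $X$ with a dense orbit inherited from the open orbit in $\A^N$, and a dimension check using $N = \dim \Spec R(X) = \dim X + \rho(X)$ yields $\dim\bigl((\C^*)^N/G\bigr) = \dim X$, as required.

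The one substantive input is the GIT reconstruction of $X$ from $R(X)$ and the associated characteristic torus action, which is part of the foundational theory of Mori Dream Spaces established by Hu--Keel. Granted this, together with the routine fact that a GIT quotient of affine space by a diagonal subtorus of its big torus is toric, no real obstacle remains; the smoothness hypothesis enters only to guarantee that $\Pic(X) = \Cl(X)$ and hence that $G$ is a torus rather than a quasi-torus.
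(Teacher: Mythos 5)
The paper itself gives no proof of this statement: it is a survey, and the theorem is quoted directly from Hu--Keel \cite{HK}, so there is no internal argument to compare against. Your proposal reconstructs what is essentially the Hu--Keel proof (their Corollary 2.10): realize $X$ as a GIT quotient $\Spec R(X) \git_L G$ with $G = \Hom(\Cl(X),\C^*)$, observe that a polynomial Cox ring makes this a quotient of $\A^N$ by a subtorus of the big torus $(\C^*)^N$, and conclude via Cox's quotient construction that such a quotient is toric. The overall route is correct and is the standard one.

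One step deserves more care than you give it. From an \emph{abstract} isomorphism $R(X) \cong \C[x_1,\dots,x_N]$ it does not formally follow that the variables $x_i$ can be taken $\Cl(X)$-homogeneous, and without homogeneous coordinates the $G$-action on $\A^N$ need not be diagonal, which is what your whole argument rests on. The fix is standard but should be said: the grading of $R(X)$ by $\Cl(X)$ is \emph{pointed}, because $R(X)_0 = H^0(X,\O_X) = \C$ and the classes of nonzero graded pieces lie in the effective cone, which is strongly convex for $X$ projective; a pointed grading admits a homogeneous minimal generating set (graded Nakayama applied to the irrelevant ideal), and for a polynomial ring any minimal generating set of cardinality $N = \dim \Spec R(X)$ is automatically a coordinate system. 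Similarly, your parenthetical claim that the degrees $D_i$ span $\Cl(X)$ as a group uses that $\PEff(X)$ is full-dimensional in $\Cl(X)\otimes\R$ (so its lattice points generate the lattice), not merely that the $x_i$ generate the algebra. With these two points supplied, the argument is complete; the remaining ingredients (the GIT reconstruction of a Mori Dream Space from its Cox ring, and the torus-invariance of the semistable locus in $\A^N$, which gives the dense orbit of $(\C^*)^N/G$ downstairs) are correctly identified and correctly attributed to Hu--Keel and Cox.
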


More generally, Hu--Keel prove that a $\Q$-factorial Mori Dream Space $X$
can be recovered from any of its Cox rings via a GIT construction.
Moreover, the Mori chamber decomposition
of $X$ descends to $X$ via this construction from a
chamber decomposition associated to variations
of linearizations in the GIT setting.
From this perspective, the
Cox ring of a Fano variety is a very rich invariant,
encoding all essential information on the biregular and
birational geometry of the variety.

The above discussion shows how the main features of the geometry of a Fano variety $X$,
both from a biregular and a birational standpoint, are encoded in combinatorial
data embedded in the spaces $N_1(X)$ and $N^1(X)$.
Loosely speaking, we will say that that geometric properties of $X$
that are captured by such combinatorial data
constitute the {\it Mori structure} of $X$.

In the remaining part of the paper, we will discuss to which extend
the Mori structure of a Fano variety is preserved
under flat deformations. Any positive result in this direction
should be thought of as a rigidity statement.

The following result of Wi\'sniewski is the first strong evidence
that Fano varieties should behave in a somewhat rigid way under deformations
(cf. \cite{Wis,Wis2}).

\begin{thm}\label{thm:Wis}
The nef cone is locally constant in smooth families of Fano varieties.
\end{thm}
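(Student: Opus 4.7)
The plan is to prove that $\Nef(X_t)$ is locally constant by establishing, for a smooth projective family $\pi \colon \mathcal{X} \to T$ of Fano manifolds and any base point $0 \in T$, that $\Nef(X_0) = \Nef(X_t)$ for all $t$ in a neighborhood of $0$, under a canonical identification of N\'eron--Severi spaces.

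First I would set up this identification. Since each fiber $X_t$ is Fano, Kodaira vanishing (together with Serre duality) gives $H^i(X_t,\mathcal{O}_{X_t}) = 0$ for all $i > 0$. The exponential sequence then yields $\Pic(X_t) \cong H^2(X_t,\mathbb{Z})$, and Ehresmann's theorem guarantees that $H^2(X_t,\mathbb{Z})$ is locally constant on $T$. Hence the relative Picard scheme is \'etale over $T$ and the spaces $N^1(X_t)$ carry a canonical local system structure, so that numerical classes can be transported between nearby fibers without ambiguity.

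The inclusion $\Nef(X_0) \subseteq \Nef(X_t)$ then follows from openness of ampleness: any ample class on $X_0$ extends to a line bundle on $\mathcal{X}$ near $X_0$ that restricts to an ample class on every nearby fiber. For the reverse inclusion, equivalently $\CNE(X_0) \subseteq \CNE(X_t)$ by duality, I would invoke the Cone Theorem to write $\CNE(X_0)$ as the rational polyhedral cone spanned by finitely many classes of rational curves $C_0 \subset X_0$ with bounded anticanonical degree $-K_{X_0}\cdot C_0 \leq 2n$, and then argue that each such $C_0$ deforms to a rational curve in $X_t$ whose class matches $[C_0]$ under the identification of $N_1$. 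This would yield $[C_0] \in \CNE(X_t)$ and complete the proof.

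The main obstacle lies in this deformation claim. For curves that are free, i.e.\ with $H^1(C_0, N_{C_0/X_0}) = 0$, standard deformation theory makes the relative Hilbert scheme smooth at $[C_0]$ and dominant over $T$, so the conclusion is immediate; one reaches this situation in general by choosing minimal rational curves in the covering family provided by the Cone Theorem, such curves being free on a smooth Fano variety. Alternatively, in the spirit of Wi\'sniewski, one can deform Mori contractions directly: given the extremal contraction $\cont_R \colon X_0 \to Y_0$ associated to the extremal ray, one lifts it to a relative contraction $\mathcal{X} \to \mathcal{Y}$ over $T$ (using vanishing of obstructions furnished once more by the Fano condition), whose fibers then supply the required family of extremal curves in every $X_t$. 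Properness of the relative Chow scheme of curves of bounded anticanonical degree, combined with local constancy of $N_1$, ensures consistent transport of numerical classes and yields $\CNE(X_0) = \CNE(X_t)$, hence $\Nef(X_0) = \Nef(X_t)$.
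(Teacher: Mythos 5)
Your overall architecture (identify the $N^1$'s via Ehresmann and $H^i(X_t,\O_{X_t})=0$, get one inclusion of nef cones cheaply, and obtain the other by deforming the extremal rational curves furnished by the Cone Theorem) is exactly the strategy the paper attributes to Wi\'sniewski, and the setup of the local system with finite monodromy matches the paper's discussion. But the crucial step is exactly where your argument has a genuine gap: the claim that one can reduce to free curves because ``minimal rational curves in the covering family provided by the Cone Theorem'' are ``free on a smooth Fano variety'' is false. The Cone Theorem does not provide a covering family for each extremal ray: an extremal rational curve of birational type sweeps out only the exceptional locus of its contraction and is typically not free. Already for a del Pezzo surface the ray spanned by a $(-1)$-curve $C_0$ contains no moving curve at all, and $N_{C_0/X_0}=\O_{\P^1}(-1)$; for the blow-up of $\P^n$ at a point, the lines in the exceptional divisor are extremal and not free. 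So the reduction to the unobstructed case fails precisely for the rays one most needs to control, and $\CNE(X_0)\subseteq\CNE(X_t)$ is left unproved. The fallback you offer --- lifting the contraction $\cont_R\colon X_0\to Y_0$ to a relative contraction over $T$ ``using vanishing of obstructions furnished by the Fano condition'' --- is circular as stated: to produce the relative contraction one needs the supporting nef divisor of $R$ to stay nef (or the ray to survive) on nearby fibers, which is the content of the theorem.

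The actual argument (the ``theory of deformations of embedded rational curves'' the paper alludes to) does not require freeness. One takes $C_0$ a rational curve of minimal anticanonical degree in the extremal ray and works on the total space $\mathcal X$: since $K_{\mathcal X}|_{X_0}=K_{X_0}$, one has $\dim_{[g]}\Hom(\P^1,\mathcal X)\ge -K_{X_0}\cdot C_0+\dim X_0+\dim T$, which exceeds by $\dim T$ the corresponding bound inside the fiber. If every deformation of $C_0$ stayed in $X_0$, this excess dimension would produce, via bend-and-break, a degeneration contradicting the minimality of $C_0$ in its ray (equivalently, it would violate the Ionescu--Wi\'sniewski bound on the locus swept out by deformations of a minimal extremal curve). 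Hence $C_0$ deforms out of the central fiber and its class survives in $\CNE(X_t)$. You would need to supply this dimension-count/bend-and-break step (or an explicit citation to it) for the proof to be complete. A secondary, fixable point: your first inclusion via openness of ampleness gives a neighborhood of $0$ depending on the chosen ample class; the uniform statement $\Nef(X_0)\subseteq\Nef(X_t)$ is more cleanly obtained by applying properness of the relative Chow (or Hilbert) scheme of $1$-cycles of anticanonical degree at most $2\dim X$ to the finitely many generators of $\CNE(X_t)$ and specializing them to effective classes on $X_0$.
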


First notice that if $f \colon X \to T$ is a smooth family of Fano varieties,
then $f$ is topologically trivial,
and thus, if we denote by $X_t := f^{-1}(t)$ the fiber over $t$,
the space $N^1(X_t)$, being naturally isomorphic to
$H^2(X_t,\R)$, varies in a local system.
By the polyhedrality of the nef cone, this local system has finite monodromy.
This implies that, after suitable \'etale base change, one can
reduce to a setting where the spaces $N^1(X_t)$ are all naturally isomorphic.
The local constancy can therefore be intended in the \'etale topology.

Wi\'snienwki's result is the underlying motivation for
the results that will be discussed in the following sections.

\section{Deformations of the Cox rings}

The proof of Theorem~\ref{thm:Wis} has three main ingredients:
the theory of deformations of embedded rational curves,
Ehresmann Theorem,
and the Hard Lefschetz Theorem. All these ingredients use
in an essential way the fact that the family is smooth.
On the other hand, the very definitions involved in the whole Mori
structure of a Fano variety use steps in the Minimal Model Program,
which unavoidably generate singularities.
With this in mind, we will present a different approach
to the general problem of studying the deformation of Mori structures.
The main ingredients of this approach will be the use of
the Minimal Model Program in families, and
an extension theorem for sections of line bundles
(and, more generally, of divisorial reflexive sheaves).
The first implications of such approach will be on the Cox rings.
These applications will be discussed in this section.
Further applications will then presented in the following section.

When working with families of singular Fano varieties, one needs to be very cautious.
This is evident for instance in
the simple example of quadric surfaces degenerating to a quadric cone:
in this case allowing even the simplest surface singularity creates
critical problems (the Picard number dropping in the central fiber),
yielding a setting where the questions themselves cannot be posed.

We will restrict ourselves
to the smallest category of singularities which is preserved
in the Minimal Model Program, that of $\Q$-factorial terminal singularities.
This is the setting considered in \cite{dFH}.
As explained by Totaro \cite{Tot}, many of the results presented below
hold in fact under weaker assumptions on the singularities.

We consider a small flat deformation $f \colon X \to T$ of a Fano variety $X_0$.
Here $T$ is a smooth curve with a distinguished point $0 \in T$,
and $X_0 = f^{-1}(0)$. We assume that $X_0$ has terminal $\Q$-factorial singularities.
A proof of the following basic result can be found in
\cite[Corollary~3.2 and Proposition~3.8]{dFH}, where an
analogous but less trivial result is also proven to
hold for small flat deformations of weak log Fano varieties
with terminal $\Q$-factorial singularities.

\begin{prop}
For every $t$ in a neighborhood of $0$ in $T$, the fiber
$X_t$ is a Fano variety with terminal
$\Q$-factorial singularities.
\end{prop}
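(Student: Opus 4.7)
The plan is to establish, for $t$ in a neighborhood of $0\in T$, three properties of $X_t$: terminality of singularities, ampleness of $-K_{X_t}$, and $\Q$-factoriality. The single geometric input that underlies everything is that $T$ is a smooth curve, so $X_0$ is an effective Cartier divisor in $X$, and adjunction yields $K_{X_0}=(K_X+X_0)|_{X_0}$.

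First I would deal with the singularities of $X$ and of the nearby fibers. Since $X_0$ is terminal (in particular klt) and Cartier in $X$, inversion of adjunction shows that $(X,X_0)$ is plt in a neighborhood of $X_0$; in particular $X$ is klt, hence normal and $\Q$-Gorenstein, in that neighborhood, and $K_{X/T}$ is $\Q$-Cartier and restricts to $K_{X_t}$ on each fiber. Exploiting the fact that $X_0$ is actually terminal, so that $\codim_{X_0}\Sing(X_0)\geq 3$, and therefore $\codim_X\Sing(X)\geq 3$ near $X_0$ because $X_0$ is Cartier, one improves this to canonical singularities on $X$ and terminal singularities on the general fibers $X_t$: the non-terminal locus of the family is closed and meets $X_0$ only in its singular locus, so all fibers over a small neighborhood of $0$ are terminal. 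Ampleness of $-K_{X_t}$ for $t$ close to $0$ is then a consequence of openness of ampleness for the $\Q$-Cartier divisor $-K_{X/T}$, whose restriction to $X_0$ is ample by hypothesis.

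The main obstacle is $\Q$-factoriality, which does not in general descend to fibers of a flat family. The natural strategy is first to prove that $X$ itself is $\Q$-factorial in a neighborhood of $X_0$, and then to transfer this property to each Cartier fiber $X_t$. For the first step, one uses that $X_0$ is a klt Fano variety, so Kawamata-Viehweg vanishing gives $H^i(X_0,\O_{X_0})=0$ for $i>0$, together with the finite generation of $\Cl(X_0)$ coming from Theorem~3.1 of~\cite{Tot}; a cohomological spreading argument based on these vanishings then extends $\Q$-Cartier multiples of Weil divisor classes from $X_0$ to reflexive sheaves on $X$ that admit a Cartier structure, so that $\Q$-factoriality of $X_0$ upgrades to $\Q$-factoriality of $X$ near $X_0$. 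For the second step, given a prime Weil divisor $D_t\subset X_t$, its Zariski closure $D$ in $X$ is a Weil divisor on $X$ after possibly shrinking $T$, and by the previous step $D$ is $\Q$-Cartier; restricting a Cartier multiple of $D$ to the Cartier fiber $X_t$ exhibits $D_t$ as $\Q$-Cartier, completing the argument.
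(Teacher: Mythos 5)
The paper does not actually prove this proposition; it is quoted from \cite[Corollary~3.2 and Proposition~3.8]{dFH}. Judged on its own terms, your argument contains two genuine gaps. The first is the passage from ``$(X,X_0)$ plt, hence $X$ klt'' to ``$X$ canonical and $X_t$ terminal'' via the bound $\codim_X\Sing(X)\geq 3$. A klt singularity with singular locus of codimension $\geq 3$ need not be canonical, let alone terminal: $\C^3/\Z_4(1,1,1)$ is an isolated klt singularity whose blow-up of the origin has discrepancy $-1/4$. What is actually needed here is inversion of adjunction for \emph{canonical/terminal} pairs ($X_0$ terminal and Cartier in $X$ implies $(X,X_0)$ terminal near $X_0$), which is one of the main theorems of \cite{dFH} and is itself proved with the extension theorem --- it cannot be replaced by a codimension count. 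Note also that even your appeal to klt inversion of adjunction presupposes that $K_X+X_0$ is $\Q$-Cartier, i.e.\ that the total space is $\Q$-Gorenstein near $X_0$; this is not automatic for a flat family and is part of what has to be established (in \cite{dFH} it comes out of the $\Q$-factoriality of $X$ near $X_0$, via the Grothendieck--Lefschetz theorem of Ravindra--Srinivas).

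The second gap is the transfer of $\Q$-factoriality from $X$ to the fibers. Your closure argument does not get off the ground: a prime divisor $D_t\subset X_t$ is already closed in $X$ (since $X_t$ is closed), so its Zariski closure is $D_t$ itself, a codimension-two subvariety of $X$, not a Weil divisor on $X$; there is nothing to restrict. More seriously, the implication you are after is false in general: $\C^4$ is factorial, yet the Cartier hypersurface $\{xy=zw\}$ inside it is not $\Q$-factorial. So $\Q$-factoriality of $X$ near $X_0$ gives no control over $\Cl(X_t)$ by itself. The missing ingredient is the surjectivity of the restriction $\Cl(X)\to\Cl(X_t)$ (equivalently, that every Weil divisor on $X_t$ spreads out to a Weil divisor on $X$), which in \cite{dFH} and \cite{Tot} is obtained from the Ravindra--Srinivas version of the Grothendieck--Lefschetz theorem together with the vanishing of $H^1(X,\O_X)$ and $H^2(X,\O_X)$ and the codimension conditions on the singularities. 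Once that surjectivity is in place, your ``restrict a Cartier multiple to the Cartier fiber'' step does work; without it the argument fails exactly at the quadric-cone phenomenon. The remaining points (ampleness of $-K_{X_t}$ by openness of ampleness once $K_X$ is known to be $\Q$-Cartier, and the reduction of ``all $t$ near $0$'' to ``general $t$'' over a curve) are fine.
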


After shrinking $T$ near $0$, we can therefore assume that $f \colon X \to T$
is a flat family of Fano varieties with terminal $\Q$-factorial singularities.
If $t \in T$ is a general point, the monodromy on $N^1(X_t)$
has finite order. This can be seen using the fact that the monodromy
action preserves the nef cone of $X_t$, which is finitely generated
and spans that whole space. After a suitable base change, one may always reduce
to a setting where the monodromy is trivial.

If $f$ is a smooth family, then it is topologically trivial,
and we have already noticed
that the spaces $N^1(X_t)$ vary in a local system.
We have remarked how in general the dimension of these spaces may jump
if $f$ is not smooth.
Under our assumptions on singularities the property remains however true.
The proof of following result is given in \cite[Proposition~6.5]{dFH}, and
builds upon results of Koll\'ar--Mori \cite[(12.10)]{KM92}.

\begin{thm}\label{thm:N^1-const}
The spaces $N^1(X_t)$ and $N_1(X_t)$ form local systems on $T$ with finite monodromy.
After suitable base change, for every $t \in T$ there are natural isomorphisms
$N^1(X/T) \cong N^1(X_t)$ and $N_1(X_t) \cong N_1(X/T)$ induced, respectively,
by pull-back and push-forward.
\end{thm}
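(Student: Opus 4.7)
The plan is to prove that, after a finite \'etale base change $T' \to T$, the pull-back map $i_t^* \colon N^1(X/T) \to N^1(X_t)$ is an isomorphism for every $t \in T'$. Once this is established, all the spaces $N^1(X_t)$ are canonically identified with the fixed vector space $N^1(X/T)$, which gives the local system structure on $T'$ and, descending to $T$, a local system with finite monodromy. The dual statement for curves follows by duality, the push-forward $N_1(X_t) \to N_1(X/T)$ being the adjoint of $i_t^*$ under the intersection pairing. Finiteness of the monodromy on $N^1(X_t)$ is actually automatic: the monodromy preserves the integral lattice $\Pic(X_t)/\mathrm{torsion}$ as well as the nef cone $\Nef(X_t)$, which by the Cone Theorem for Fano varieties is rational polyhedral and of maximal dimension, so the monodromy group embeds into the finite symmetry group of a rational polytope.

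\medskip

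For injectivity of $i_t^*$, suppose $L \in N^1(X/T)$ satisfies $i_0^* L = 0$. Since $f$ is a Fano fibration with $\Q$-factorial terminal fibers, the relative Cone Theorem tells us that $\CNE(X/T)$ is rational polyhedral and generated by classes of irreducible curves lying in fibers of $f$. It therefore suffices to check that $L \cdot C = 0$ for every such fiber curve $C \subset X_s$. But the intersection number $L \cdot C_\tau$ is locally constant as $C_\tau$ varies in the relative Chow scheme parameterizing fiber curves, and every irreducible component of this parameter space specializes to the central fiber, where $L$ pairs to zero with every curve class. Hence $L$ pairs trivially with all fiber curves, and so $L = 0$ in $N^1(X/T)$.

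\medskip

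Surjectivity is the heart of the argument and where the terminal $\Q$-factorial hypothesis is used decisively. Given $\a_0 \in N^1(X_0)$, pick a $\Q$-Cartier Weil divisor $L_0$ representing $\a_0$ and replace it by a sufficiently divisible integer multiple so that $\OO_{X_0}(L_0)$ is a genuine line bundle. The obstruction to lifting this line bundle through each infinitesimal thickening $X_0 \subset X_n \subset X$ lies in $H^2(X_0,\OO_{X_0})$, which vanishes by Kawamata--Viehweg vanishing applied to the log terminal Fano $X_0$. All obstructions therefore vanishing, $\OO_{X_0}(L_0)$ extends to a formal line bundle along $X_0 \subset X$, and Grothendieck's existence theorem algebraizes this formal sheaf (after shrinking $T$ about $0$ if necessary) to an honest line bundle $\LL$ on $X$ restricting to $\OO_{X_0}(L_0)$. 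Dividing out by the chosen integer multiple exhibits $\a_0$ as the image of a rational class in $N^1(X/T)$, and since rational classes span $N^1(X_0)$ over $\R$ this yields surjectivity of $i_0^*$. The main obstacle is precisely to make this lifting argument work uniformly for every $t \in T$ and to handle the interplay between Weil divisors and $\Q$-Cartier Weil divisors in the singular setting; this is the content of \cite[(12.10)]{KM92} and its refinement in \cite[Proposition~6.5]{dFH}, which rely on the preservation of the terminal $\Q$-factorial condition under small deformation to produce $\Q$-Cartier extensions, not just Cartier extensions over a smooth open subscheme.
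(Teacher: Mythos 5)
Your outline at the distinguished fiber is essentially sound: surjectivity of $N^1(X/T) \to N^1(X_0)$ via the vanishing of the obstruction space $H^2(X_0,\O_{X_0})$ plus algebraization, injectivity via specialization of curves, and finiteness of monodromy from the polyhedrality of the nef cone are all standard and in the spirit of the cited sources. But two things go wrong. First, a local inaccuracy: it is false that ``every irreducible component of the relative Chow scheme specializes to the central fiber''---a component may lie entirely over a single point $s \neq 0$. To repair this you must first use the relative Cone Theorem to reduce to curves of bounded anticanonical degree (so that the relevant Chow scheme has finitely many components) and then shrink $T$ to discard the finitely many components sitting over points other than $0$; only then does properness over $T$ force specialization to $X_0$.

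The more serious gap is that your argument only produces the isomorphism $N^1(X/T) \cong N^1(X_0)$ at the chosen special fiber, whereas the theorem asserts it for \emph{every} $t$, and that passage is precisely where the terminal $\Q$-factorial hypothesis carries the weight. At a general $t$, the infinitesimal lifting argument only extends a line bundle from $X_t$ to a small (\'etale or analytic) neighborhood of $X_t$ in $X$, which need not contain $X_0$; it does not give a class in $N^1(X/T)$, so surjectivity of $i_t^*$ does not follow. The quadric-cone degeneration mentioned in the paper shows this step can genuinely fail: there $i_0^*$ is an isomorphism but $\rho(X_t) > \dim N^1(X/T)$ for $t \neq 0$. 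Injectivity of $i_t^*$ for general $t$ also does not follow from specialization, because $X_0$ may contain rigid curves that do not deform to nearby fibers (exactly the flipping curves in Totaro's examples discussed later in the paper), and your curve-theoretic argument says nothing about how a class trivial on $X_t$ pairs with those. The missing ingredient is that terminality and $\Q$-factoriality of $X_0$ propagate to the total space (after base change), so that Weil divisors spread out from nearby fibers are $\Q$-Cartier on all of $X$ and hence define classes in $N^1(X/T)$; this is the actual content of \cite[(12.10)]{KM92} and \cite[Proposition~6.5]{dFH}, which you invoke only in your final sentence and only as a technical remark about Weil versus Cartier divisors, rather than as the core step that your argument leaves unproved.
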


A similar property holds for the class group, and is stated next.
The proof of this property is given in \cite[Lemma~7.2]{dFH},
and uses the previous result in combination with a
generalization of the Lefschetz Hyperplane Theorem due to
Ravindra--Srinivas \cite{RS06}
(the statement is only given for toric varieties, but the proof
works in general).
As shown by Totaro in \cite[Theorem~4.1]{Tot}, the
same result holds more generally, only imposing that $X$ is a projective
variety with rational singularities and
$H^1(X,\mathcal O _X)= H^2(X,\mathcal O _X)=0$ (these conditions
hold for any Fano variety) and that $X_0$ is smooth in codimension
$2$ and  $\Q$-factorial in codimension three.

\begin{thm}
With the same assumptions as in Theorem~\ref{thm:N^1-const},
the class groups $\Cl(X_t)$ form a local system on $T$ with finite monodromy.
After suitable base change, for every $t \in T$ there are natural isomorphisms
$\Cl(X/T) \cong \Cl(X_t)$ induced
by restricting Weil divisors to the fiber (the restriction is
well-defined as the fibers are smooth in codimension one
and their regular locus is contained in the regular locus of $X$).
\end{thm}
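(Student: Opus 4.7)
The plan is to promote the isomorphism $N^1(X/T) \cong N^1(X_t)$ of Theorem~\ref{thm:N^1-const} from the level of Cartier divisor classes to that of Weil divisor classes, by means of a Lefschetz-type theorem for class groups in place of the Hodge-theoretic input used for Picard groups. Once the restriction map $\Cl(X/T) \to \Cl(X_t)$ is shown to be a natural isomorphism, the local-system structure and finite monodromy follow along the same lines as in Theorem~\ref{thm:N^1-const}.

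First I would verify that restriction of Weil divisors from $X$ to a fiber $X_t$ is well defined. Since $T$ is a smooth curve, $X_t$ is an effective Cartier divisor in $X$; by hypothesis $X_t$ is smooth in codimension one with $(X_t)_{\rm reg}\subset X_{\rm reg}$, so a prime Weil divisor on $X$ not containing $X_t$ can be intersected with $(X_t)_{\rm reg}$ inside $X_{\rm reg}$ and closed up to yield a Weil divisor on $X_t$. Extending linearly and quotienting by divisors pulled back from $T$ produces a homomorphism $\Cl(X/T)\to\Cl(X_t)$. The crucial step is to show this is an isomorphism, and here I would invoke the Ravindra--Srinivas generalization of the Grothendieck--Lefschetz theorem for class groups: for a normal projective variety $Y$ and a sufficiently general ample effective divisor $H\subset Y$ of sufficiently high dimension and with appropriate control on the singularities of $Y$ and $H$, restriction induces an isomorphism $\Cl(Y)\xrightarrow{\sim}\Cl(H)$. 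In our setting $H$ is replaced by the fiber $X_t$, and the terminal $\Q$-factorial hypothesis on $X$ and $X_t$ (weakened, following Totaro, to smoothness in codimension two and $\Q$-factoriality in codimension three of $X_t$ together with $H^i(X,\O_X)=0$ for $i=1,2$) is exactly what is needed to apply the theorem. Combined with Theorem~\ref{thm:N^1-const}, this identifies $\Cl(X/T)$ with each $\Cl(X_t)$ for $t$ near~$0$.

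The local-system structure and finite monodromy then follow as in the Picard case: $\Cl(X_t)$ is finitely generated by Totaro's cited result, and the monodromy action preserves both the image of $N^1(X_t)$ and the pseudo-effective cone $\PEff(X_t)$, which together force it to factor through a finite group that can be killed by an \'etale base change. I expect the principal technical obstacle to be the Lefschetz step: Ravindra--Srinivas require $H$ to be sufficiently general among ample divisors, whereas the particular Cartier divisor $X_t$ in the family is not of this form, so one has to establish a version of their result in which the singularity hypotheses on $X$ and $X_t$ compensate for the loss of genericity. This is precisely the point at which the $\Q$-factorial terminal assumption (or Totaro's weakening of it) is used in an essential way.
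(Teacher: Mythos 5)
Your two ingredients---Theorem~\ref{thm:N^1-const} and the Ravindra--Srinivas Grothendieck--Lefschetz theorem for class groups---are exactly the ones the paper cites (the proof is \cite[Lemma~7.2]{dFH}, with Totaro's Theorem~4.1 giving the version under weakened hypotheses). But the way you propose to deploy the second ingredient does not work, and the difficulty is not the one you identify. You want to apply Ravindra--Srinivas with the fiber $X_t$ playing the role of the ample hypersurface $H\subset Y$, and you flag as the ``principal technical obstacle'' only that $X_t$ is not a \emph{general} member of an ample linear system. The real problem is that $X_t$ is not ample at all: it is a fiber of $f$, so $\O_X(X_t)=f^*\O_T(t)$ has trivial restriction to $X_t$ and degree zero on every curve contained in a fiber. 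No Lefschetz-type statement can be rescued for such a divisor by strengthening singularity hypotheses; positivity of the normal bundle is the engine of every proof of a Lefschetz theorem, and here there is none. So the central step of your argument---the isomorphism $\Cl(X/T)\cong\Cl(X_t)$---is not established.

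What replaces positivity in this situation is properness of $f$ over the completion of $T$ at $0$: the formal completion of $X$ along $X_0$ is the formal scheme of the family itself, so comparing $\Cl(X)$ with $\Cl(X_0)$ is a deformation/algebraization problem (extend a rank-one reflexive sheaf, equivalently a line bundle on the regular locus, across infinitesimal neighborhoods), governed by the vanishing $H^1(X_0,\O_{X_0})=H^2(X_0,\O_{X_0})=0$ and by the control on the singular locus (smooth in codimension $2$, $\Q$-factorial in codimension $3$) rather than by a hyperplane theorem. Ravindra--Srinivas enters elsewhere: it is applied to genuinely ample hypersurface sections in order to compute the class groups of the fibers in terms of Picard groups of complete intersections, respectively in terms of the topological invariant $H_{2n-2}(X_t,\Z)$ recalled earlier in the paper, which is what allows one to compare $\Cl(X_t)$ across the family and combine the result with Theorem~\ref{thm:N^1-const}. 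Your opening paragraph (well-definedness of restriction of Weil divisors) and your closing paragraph (finite generation of $\Cl(X_t)$ plus preservation of $\PEff(X_t)$ forces finite monodromy, killed by \'etale base change) are both fine once the isomorphism is in place.
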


For simplicity, we henceforth assume that the monodromy is trivial.
It follows by the first theorem that one can fix a common grading
for Cox rings of the fibers $X_t$ of the type considered in \cite{HK}.
The second theorem implies that
the there is also a common grading, given by $\Cl(X/T)$,
for the full Cox rings of the fibers.
This is the first step needed to control the Cox rings along the deformation.
The second ingredient is the following extension theorem.

\begin{thm}\label{thm:extension:Fano-case}
With the above assumptions, let $L$ be any Weil divisor on $X$ that
does not contain any fiber of $f$ in its support. Then, after possibly
restricting $T$ (and consequently $X$) to a neighborhood of $0$,
the restriction map
$$
H^0(X,\O_X(L)) \to H^0(X_0,\O_{X_0}(L|_{X_0}))
$$
is surjective (here $L|_{X_0}$ denotes the restriction of $L$ to $X_0$
as a Weil divisor).
\end{thm}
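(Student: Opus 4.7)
The plan is to reduce to a Hacon--M$^{\rm c}$Kernan type extension theorem for plt pairs, using the ampleness of $-K_{X_0}$ to supply the positivity that $L$ itself is not assumed to have. After possibly shrinking $T$ around $0$, we may assume $X_0$ is cut out on $X$ by a single function $t \in \O_T$, so that $\O_X(X_0) \cong \O_X$, and we may evidently assume that $H^0(X_0, \O_{X_0}(L|_{X_0})) \neq 0$, otherwise the statement is vacuous.

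Pick a nonzero section $s_0 \in H^0(X_0, \O_{X_0}(L|_{X_0}))$ with zero divisor $D_0 \sim L|_{X_0}$. Since $X_0$ has terminal, hence klt, singularities, inversion of adjunction shows that the pair $(X, X_0)$ is plt in a neighborhood of $X_0$. Writing $L \sim_\Q K_X + X_0 + A$ with $A := L - K_X - X_0$, adjunction gives
$$A|_{X_0} \sim_\Q L|_{X_0} - K_{X_0} \sim_\Q D_0 - K_{X_0}.$$
Since $-K_{X_0}$ is ample, this restriction is the sum of an effective and an ample divisor on $X_0$, hence big; this is the only place where the Fano hypothesis enters decisively, and it compensates for the absence of any a priori positivity on $L$.

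With this positivity in hand, I would apply a plt-extension theorem in the style of Hacon--M$^{\rm c}$Kernan: on a plt pair $(X, X_0)$ with $X_0$ the unique non-klt place and with a divisor of the form $K_X + X_0 + A$ whose $A|_{X_0}$ is big, sections on $X_0$ lift to $X$. The main technical obstacle is that $L$ is only a Weil divisor, not Cartier, so the classical extension theorem does not apply off the shelf. The natural workaround is to choose an integer $m$ sufficiently divisible so that $mL$ becomes Cartier in a neighborhood of $X_0$ (here the $\Q$-factoriality of $X_0$ and the invariance of the class group from Theorem~\ref{thm:N^1-const} are used to make a uniform choice of $m$), apply the standard extension theorem to $mL$ together with the section $s_0^{\otimes m}$, and then descend to $L$.

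The descent step is the most delicate part of the argument, because a lifted section of $\O_X(mL)$ need not factor as an $m$-th power of a section of $\O_X(L)$. The right way to overcome this is to work throughout with divisorial reflexive sheaves, replacing the classical plt extension theorem with a reflexive-sheaf version that produces a lift directly inside $\O_X(L)$ rather than in its Veronese $\O_X(mL)$; the $S_2$ property of reflexive sheaves on normal varieties, together with the control of $L$ on the relative smooth locus of $f$ (whose complement has codimension at least two in each fiber by the terminality assumption), is what makes such a reflexive formulation run through in parallel to the Cartier case.
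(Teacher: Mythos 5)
Your high-level instincts---that the ampleness of $-K_{X_0}$ must supply the positivity that $L$ lacks, and that the Weil-divisor issue is the real crux---are the right ones, but both key steps of the argument have genuine gaps. First, the positivity you produce sits on the wrong space. Every plt extension theorem of Hacon--M$^{\rm c}$Kernan/Takayama/BCHM type requires the boundary to be decomposed as $S+A+B$ with $A$ a general \emph{ample divisor on $X$} and $B\geq 0$ \emph{effective on $X$}; you only show that $A|_{X_0}\sim_\Q D_0-K_{X_0}$ is big on the central fiber, and you do so using the divisor $D_0$ of the very section you are trying to lift. Upgrading this to positivity of $A=L-K_X-X_0$ on $X$ (equivalently, on nearby fibers) would require knowing that $L|_{X_t}$ is pseudo-effective for general $t$, which is precisely what the theorem is meant to establish (Corollary~\ref{cor:PEff} is \emph{deduced from} this statement, so invoking it here would be circular). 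The paper's reduction avoids $D_0$ entirely: for $k\gg 0$ the class $\frac{1}{k}L-K_X$ is ample over $T$, because $-K_X$ is ample over $T$ and the ample cone is open; one then chooses a general effective $D\equiv \frac{1}{k}L-K_X$ with $(X_0,D|_{X_0})$ canonical and feeds $L\equiv k(K_X+D)$ into Theorem~\ref{thm:extension:general-case}, whose hypotheses are honestly checkable on the central fiber alone because all the positivity has been sourced from $-K_X$ rather than from $L$.

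Second, and more seriously, the Weil-divisor step is not an adaptation of the Cartier case but the main content of the theorem. You rightly reject the ``lift $s_0^{\otimes m}$ and take $m$-th roots'' route, but the proposed replacement---a ``reflexive-sheaf version'' of the plt extension theorem justified by the $S_2$ property---is asserted rather than proved, and it does not exist off the shelf: the Cartier-case machinery (multiplier ideals, Ohsawa--Takegoshi, Kawamata--Viehweg vanishing) is intrinsically about line bundles and does not formally transport to divisorial reflexive sheaves. The paper's actual mechanism is Theorem~\ref{thm:MMP-over-T}: run the relative Minimal Model Program with scaling over $T$, verify at each step that it induces a step of the same type on $X_0$, arrive at a model on which the relevant adjoint divisor is semiample over $T$ so that vanishing gives surjectivity of restriction there, and then transport sections back through the MMP, whose steps preserve the spaces of sections of the divisorial sheaves in question. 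This MMP-in-families argument is exactly what is missing from your proposal; without it neither the non-Cartier case nor the ``hypotheses only on the central fiber'' feature of the statement can be recovered.
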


When $L$ is Cartier, this theorem is a small generalization
of Siu's invariance of plurigenera for varieties of general type.
The formulation for Weil divisors follows by a more general
result of \cite{dFH}, which is recalled below in
Theorem~\ref{thm:extension:general-case}.

As a corollary of the above theorems, we obtain the flatness of the Cox rings.

\begin{cor}
The full Cox ring $R(X_0)$ of $X_0$, as well as
any Cox ring $R(L_{0,1},\dots,L_{0,\r})$ of $X_0$
(provided the line bundles $L_{0,i}$ on $X_0$ are sufficiently divisible),
deform flatly in the family.
\end{cor}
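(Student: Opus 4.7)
The plan is to construct, after suitable base change and shrinking of $T$ around $0$, a $\Cl(X/T)$-graded $\O_T$-algebra
$$
R(X/T) := \bigoplus_{[D] \in \Cl(X/T)} f_*\O_X(D)
$$
that is flat in each graded piece and whose formation commutes with restriction to any fiber. Granting this, the corollary follows: for the full Cox ring, restrict to $t = 0$; for $R(L_{0,1},\dots,L_{0,\r})$, I would first extend the $L_{0,i}$ to line bundles $L_i$ on $X$ (possible once the $L_{0,i}$ are sufficiently divisible, using the identification $\Pic(X) \cong N^1(X)$ available for Fano varieties via $H^1(X,\O_X)=0$, together with the isomorphism $N^1(X/T) \cong N^1(X_t)$ from Theorem~\ref{thm:N^1-const}), and then restrict the grading above to the subgroup $\Z^\r \subset \Cl(X/T)$ generated by the classes $[L_i]$.

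The heart of the argument is to show that, for each fixed class, the coherent $\O_T$-module $f_*\O_X(D)$ is locally free at $0$ and that the base change map $(f_*\O_X(D)) \otimes k(t) \to H^0(X_t, \O_{X_t}(D|_{X_t}))$ is an isomorphism for every $t$ near $0$. For this, I would choose a representative $D$ whose support contains no fiber (achievable after shrinking $T$, since we may subtract off any vertical components), apply Theorem~\ref{thm:extension:Fano-case} to obtain surjectivity of $H^0(X,\O_X(D)) \to H^0(X_0,\O_{X_0}(D|_{X_0}))$, and combine this with the upper semi-continuity of $t \mapsto h^0(X_t,\O_{X_t}(D|_{X_t}))$ to conclude that this function is locally constant near $0$. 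Grauert's theorem, applied to the flat family of coherent sheaves so obtained, then yields the desired local freeness and base-change compatibility. Summing over all $[D]$ and checking that the multiplicative structure on $R(X/T)$ restricts fiberwise to that on $R(X_t)$ completes the proof that $R(X/T)\otimes_{\O_T} k(t) \cong R(X_t)$ as graded rings.

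The principal obstacle is the Weil-divisor nature of $\O_X(D)$: neither $\O_X(D)$ nor its naive restriction to a fiber is locally free in general, so one must verify by hand that the relevant semi-continuity and base-change machinery still applies, and, more seriously, that the restriction to $X_t$ really produces the reflexive sheaf $\O_{X_t}(D|_{X_t})$ attached to the restricted Weil divisor rather than some other rank-one sheaf of the same generic rank. This is precisely where the hypothesis of terminal $\Q$-factorial singularities on the fibers (in particular, smoothness in codimension $2$) enters in an essential way, and where the Weil-divisor formulation of the extension theorem (rather than the classical Cartier-divisor invariance of plurigenera of Siu) does the real work.
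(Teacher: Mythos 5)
Your proposal is correct and follows essentially the same route as the paper, which obtains the corollary by combining the local constancy of $\Cl(X_t)$ (giving the common grading) with Theorem~\ref{thm:extension:Fano-case} applied degree by degree; the paper itself only sketches this, deferring the details (including the base-change and reflexive-sheaf subtleties you rightly flag as the real technical content) to \cite{dFH}. Your packaging via semicontinuity and Grauert, and your observation that the Weil-divisor formulation of the extension theorem is what makes the argument go through on the singular fibers, match the intended argument.
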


The flatness of the deformation of the full Cox ring has a very interesting
consequence when applied to deformations of toric Fano varieties.

\begin{cor}
Simplicial toric Fano varieties with terminal singularities are rigid.
\end{cor}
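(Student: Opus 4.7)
The plan is to combine the flatness of the full Cox ring with Cox's GIT reconstruction of a toric variety. Let $f\colon X \to T$ be a small flat deformation of $X_0$ over a smooth pointed curve $(T,0)$. By the results of the previous section, after shrinking $T$ and possibly passing to a finite \'etale base change to trivialize monodromy, I may assume that every fiber $X_t$ is a $\Q$-factorial Fano variety with terminal singularities, that restriction induces isomorphisms $\Cl(X/T) \cong \Cl(X_t)$ for all $t \in T$, and that the full Cox ring $R(X/T)$ is $\O_T$-flat with central fiber $R(X_0)$. Since $X_0$ is a simplicial toric Fano variety, Cox's theorem identifies $R(X_0)$ with the polynomial ring $\C[x_\lambda \mid \lambda \in \Delta(1)]$, graded by $\Cl(X_0)$ via $\deg(x_\lambda) = [D_\lambda]$, where $D_\lambda$ is the toric prime invariant divisor attached to the ray $\lambda$.

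Next I would lift the polynomial generators to the total space. Under the isomorphism $\Cl(X/T) \cong \Cl(X_0)$ each class $[D_\lambda]$ corresponds to a class on $X$; choose a Weil divisor $\tilde D_\lambda$ on $X$ representing this class and, after subtracting suitable multiples of the principal divisor $X_0 = f^*(0)$, arrange that $\tilde D_\lambda$ does not contain $X_0$ in its support. Then $\tilde D_\lambda|_{X_0}$ is linearly equivalent to $D_\lambda$, so $x_\lambda$ may be regarded as a section of $\O_{X_0}(\tilde D_\lambda|_{X_0})$, and the extension theorem (Theorem~\ref{thm:extension:Fano-case}) supplies a lift $\tilde x_\lambda \in H^0(X, \O_X(\tilde D_\lambda))$. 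Assembled together these lifts yield a map of $\Cl(X/T)$-graded $\O_T$-algebras
\[
\phi \colon \O_T[Y_\lambda \mid \lambda \in \Delta(1)] \longrightarrow R(X/T), \qquad Y_\lambda \longmapsto \tilde x_\lambda,
\]
with $\deg Y_\lambda = [\tilde D_\lambda]$.

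The reduction of $\phi$ modulo the maximal ideal of $0 \in T$ is the identification of $\C[x_\lambda]$ with $R(X_0)$, hence an isomorphism in every graded piece. Since $R(X/T)$ is flat over $\O_T$ with this central fiber, Nakayama's Lemma applied degree by degree forces $\phi$ itself to be an isomorphism after possibly shrinking $T$. Therefore $R(X/T) \cong \O_T[Y_\lambda]$, and for every $t$ near $0$ the Cox ring $R(X_t)$ is the polynomial ring $\C[Y_\lambda]$ with the same $\Cl(X_t) \cong \Cl(X_0)$-grading as $R(X_0)$. By the Hu--Keel GIT recipe each $X_t$ is reconstructed as $\Spec R(X_t) \git G$, with $G = \Hom(\Cl(X_t),\C^*)$ acting via the grading and the linearization given by an ample class on $X_t$; since the nef cones are locally constant, one may take the linearization on $X_t$ to correspond to a fixed ample class on $X_0$. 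Because the graded ring and the linearization are identical for all $t$, the GIT quotients are isomorphic, so $X_t \cong X_0$ and the deformation is trivial.

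The main obstacle is the simultaneous lifting of all the polynomial generators of $R(X_0)$ to sections on the whole family with prescribed graded degrees; this is precisely what the extension theorem for Weil divisors supplies. Once this lifting is in place the rigidity is formal, since the Cox/GIT reconstruction of a simplicial toric Fano variety is determined by the graded Cox ring together with an ample linearization, both of which are constant throughout the deformation.
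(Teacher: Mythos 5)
Your route is essentially the one the paper intends: flatness of the full Cox ring, Cox's identification of $R(X_0)$ with a polynomial ring, the observation that a polynomial ring admits no non-isotrivial flat deformation (which you make precise by lifting the generators $x_\lambda$ via Theorem~\ref{thm:extension:Fano-case} and applying Nakayama), and then the Hu--Keel GIT reconstruction. Most of this is sound, but the final step contains a genuine gap as written.

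You justify the choice of a common linearization by asserting that ``the nef cones are locally constant.'' That statement is not available to you: Theorem~\ref{thm:Wis} applies only to \emph{smooth} families, and the last section of the paper is devoted to Totaro's examples showing that the nef cone \emph{can} jump in flat families of $\Q$-factorial terminal Fano varieties --- precisely the setting you are in. Without a legitimate common linearization, your argument only shows that each $X_t$ is the GIT quotient of $\Spec \C[Y_\lambda]$ by $G$ at \emph{some} chamber, i.e.\ some toric variety built from the same set of rays, not that $X_t \cong X_0$; the chambers have disjoint interiors and different chambers give genuinely different (flop-related) quotients. The gap is easy to close without any constancy of nef cones: choose a divisor $A$ on $X$ that is ample over $T$. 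Then $[A|_{X_t}]$ is a single class in $\Cl(X/T) \cong \Cl(X_t)$ lying in the interior of $\Nef(X_t)$ for \emph{every} $t$, so all fibers lie in the chamber containing this class and are computed by the same linearization, $X_t = \Proj \bigoplus_{m\ge 0} H^0\big(X_t,\O_{X_t}(mA|_{X_t})\big)$; your isomorphism $\phi$ then identifies these graded rings for all $t$ near $0$. A secondary, more technical point: Nakayama ``applied degree by degree'' over the infinitely many classes in $\Cl(X/T)$ requires shrinking $T$ once per degree, and infinitely many shrinkings need not leave a neighborhood of $0$. You should instead use that the relevant rings are finitely generated (surjectivity of a graded algebra map need only be checked in the finitely many degrees $[D_\lambda]$ that generate $R(X_0)$, after one shrinking); injectivity then follows in every degree without further shrinking, since each graded piece of $\ker\phi$ is a locally free $\O_T$-module (finite and torsion-free over a smooth curve) whose fiber at $0$ vanishes, hence has rank zero and is zero.
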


The proof of this corollary is based on the simple observation that
a polynomial ring has no non-isotrivial flat deformations.
This theorem appears in \cite{dFH}.
When $X$ is smooth, the result was already known, and follows by
a more general result of Bien--Brion on the vanishing of $H^1(X,T_X)$
for any smooth projective variety admitting a toroidal embedding
(these are also known as {\it regular varieties}).
The condition that the toric variety is simplicial is the translation,
in toric geometry, of the assumption of $\Q$-factoriality.
The above rigidity result holds in fact more in general,
only assuming that the toric Fano variety is smooth in codimension $2$ and
$\Q$-factorial in codimension 3. This was proven by Totaro in \cite{Tot}
using the vanishing theorems of Danilov and Mustata
$H^i(\tilde \Omega ^j\otimes \mathcal O (D)^{**})=0$
for $i>0$, $j>0$ and $D$ an ample Weil divisor on a projective toric variety.

\section{Deformations of the Mori structure}

The flatness of Cox rings in flat families of Fano varieties
with terminal $\Q$-factorial singularities
is already evidence of a strong rigidity property of such varieties.
In this section, we consider a flat family $f \colon X \to T$ of Fano varieties
with terminal $\Q$-factorial singularities, parameterized by a smooth curve $T$.

An immediate corollary of Theorem~\ref{thm:extension:Fano-case}
is the following general fact.

\begin{cor}\label{cor:PEff}
For any flat family $f \colon X \to T$ of Fano varieties
with terminal $\Q$-factorial singularities over a smooth curve $T$,
the pseudo-effective cones $\PEff(X_t)$ of the fibers of $f$
are locally constant in the family.
\end{cor}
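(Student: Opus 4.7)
The plan is to use the extension theorem (Theorem~\ref{thm:extension:Fano-case}) to show that for every Weil divisor class on $X/T$, the dimension of its space of global sections on the fiber $X_t$ is locally constant in $t$. Since effective $\Q$-classes in $N^1(X_t)$ are detected by such dimensions, this will make the set of effective classes, and hence its closure $\PEff(X_t)$, locally constant in the family.

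Fix a point $0\in T$. After a suitable \'etale base change near $0$, Theorem~\ref{thm:N^1-const} and its class-group analogue provide compatible identifications $N^1(X/T)\cong N^1(X_t)$ and $\Cl(X/T)\cong \Cl(X_t)$ for all $t$ in a neighborhood of $0$. Given a class $[L]\in\Cl(X/T)$, pick a representative Weil divisor $L$ on $X$; after possibly shrinking $T$ we may assume that $L$ contains no fiber of $f$ in its support, so that $\O_X(L)$ is $T$-flat and Theorem~\ref{thm:extension:Fano-case} yields surjectivity of
$$
H^0(X,\O_X(L)) \longrightarrow H^0(X_0,\O_{X_0}(L|_{X_0})).
$$
The pushforward $f_*\O_X(L)$ is coherent on the smooth curve $T$, and the surjectivity above forces the base-change map $f_*\O_X(L)\otimes k(0)\to H^0(X_0,\O_{X_0}(L|_{X_0}))$ to be surjective. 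By Grauert's cohomology-and-base-change theorem, $f_*\O_X(L)$ is then locally free in a neighborhood of $0$, with fibers canonically $H^0(X_t,\O_{X_t}(L|_{X_t}))$, so $h^0(X_t,\O_{X_t}(L|_{X_t}))$ is constant in $t$ near $0$. Equivalently, one may invoke the preceding corollary: each graded piece $f_*\O_X(L)$ is a direct summand of the flat $\O_T$-module $R(X/T)$, hence locally free on $T$.

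Running over all classes in $\Cl(X/T)$, the set of effective Weil classes in $\Cl(X_t)$, and hence the cone of effective $\Q$-Cartier classes in $N^1(X_t)$, is independent of $t$ in a neighborhood of $0$. Taking closures in $N^1(X_t)\cong N^1(X/T)$ gives the desired local constancy of $\PEff(X_t)$. The substantive input here is the extension theorem; the rest is formal, the only point requiring care being the \'etale base change that trivializes the monodromy on $N^1$ and $\Cl$ and the adjustment of representatives to avoid fibers in their support.
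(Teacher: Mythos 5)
Your argument is correct and follows the paper's route exactly: the corollary is stated there as an immediate consequence of Theorem~\ref{thm:extension:Fano-case} with no further proof given, and your use of the extension theorem together with semicontinuity/base change to compare effective classes on $X_0$ and on nearby fibers is precisely the intended mechanism. The one step worth tightening is the final ``running over all classes'' --- since each class may force its own shrinking of $T$ (and the Grauert step sees $\O_X(L)\otimes k(t)$ rather than the reflexive sheaf $\O_{X_t}(L|_{X_t})$), one should invoke the rational polyhedrality of $\PEff(X_t)$ coming from Theorem~\ref{t-MDS} to reduce to finitely many generating classes before concluding local constancy of the closed cone.
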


If one wants to further investigate how the Mori structure varies in the family,
it becomes necessary to run the Minimal Model Program. This requires
us to step out, for a moment, from the setting of families of Fano varieties.

Suppose for now that $f \colon X \to T$ is just a flat projective
family of normal varieties with $\Q$-factorial singularities.
Let $X_0$ be the fiber over a point $0 \in T$.
We assume that the restriction map $N^1(X) \to N^1(X_0)$ is surjective,
and that there is an effective $\Q$-divisor $D$ on $X$, not containing $X_0$
in its support, such that $(X_0,D|_{X_0})$ is a Kawamata log terminal
pair with canonical singularities. Assume furthermore that
$D|_{X_0} - a K_{X_0}$ is ample for some $a > -1$.
Note that this last condition always holds for Fano varieties.

The following result is crucial for our investigation.

\begin{thm}\label{thm:MMP-over-T}
With the above notation,
every step $X^i\dasharrow X^{i+1}$ in the Minimal Model Program of
$(X,D)$ over $T$ with scaling of $D - a K_{X}$  is either trivial on the
fiber $X_0^i$ of $X^i$ over $0$, or it
induces a step of the same type (divisorial contraction, flip, or Mori fibration)
$X^i_0\dasharrow X^{i+1}_0$ in the Minimal Model Program of $(X_0,D|_{X_0})$
with scaling of $D|_{X_0} - a K_{X_0}$.
In particular, at each step $X_0^i$ is the proper transform of $X_0$.
\end{thm}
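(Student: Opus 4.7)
The plan is to proceed by induction on the step index $i$. The inductive hypothesis carried through the argument is: $X^i$ is a normal $\Q$-factorial projective variety over $T$; the pair $(X^i,D^i)$ is klt; the proper transform $X_0^i \subset X^i$ of $X_0$ is a normal prime divisor on $X^i$; by adjunction $(X_0^i, D^i|_{X_0^i})$ is klt with canonical singularities; the restriction map $N^1(X^i) \to N^1(X_0^i)$ remains surjective; and $D^i|_{X_0^i} - a K_{X_0^i}$ stays nef, so that the scaling MMP on $(X_0^i, D^i|_{X_0^i})$ is well-defined at step $i$. The first task is to set up this hypothesis (it holds at $i=0$ by the assumptions of the theorem), and to verify it propagates through each of the three kinds of steps.

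For the $i$-th step, let $\phi^i\colon X^i \to Y^i$ be the contraction over $T$ of the extremal ray $R \subset \CNE(X^i/T)$ selected by the scaling. The dichotomy is geometric: either $\phi^i$ is an isomorphism in a neighborhood of $X_0^i$, in which case the step is trivial on $X_0^i$ and we set $X_0^{i+1}=X_0^i$; or $\bar\phi^i := \phi^i|_{X_0^i}$ contracts some curves $C \subset X_0^i$. In the non-trivial case the argument has two parts. First, the image $\bar R$ of $R$ under $N_1(X_0^i) \to N_1(X^i)$ is a $(K_{X_0^i}+D^i|_{X_0^i})$-negative extremal ray: negativity is adjunction, since $X_0^i \cdot C = 0$ as $X_0^i = (f^i)^*[0]$, so
\[
(K_{X_0^i}+D^i|_{X_0^i})\cdot C = (K_{X^i}+D^i+X_0^i)\cdot C = (K_{X^i}+D^i)\cdot C < 0;
\]
extremality follows because $N_1(X_0^i)\to N_1(X^i)$ is injective (dual to the surjection $N^1(X^i)\twoheadrightarrow N^1(X_0^i)$), so any decomposition of $[C]$ in $\CNE(X_0^i)$ pushes forward to a decomposition inside the extremal ray $R$. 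Second, $\bar\phi^i$ is the contraction of $\bar R$ over $T$, and one checks that the \textbf{type} of the step is preserved by comparing dimensions of exceptional loci: a divisorial contraction on $X^i$ restricts to a divisorial contraction on $X_0^i$ (the exceptional divisor $E$ meets $X_0^i$ in a divisor since $E$ is not contained in or equal to $X_0^i$), a small contraction restricts to a small contraction, and a Mori fibration $X^i \to Y^i$ over $T$ restricts to a Mori fibration $X_0^i \to Y_0^i$.

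The main obstacle is the flipping case, where we must identify the proper transform $X_0^{i+1}\subset X^{i+1}$ with the flip of $\bar\phi^i$. The flip is constructed as
\[
X^{i+1} = \Proj_{Y^i} \bigoplus_{m\geq 0} \phi^i_* \O_{X^i}(m(K_{X^i}+D^i)),
\]
and similarly for $X_0^i$. The identification of the two Proj's is exactly where the extension theorem (Theorem~\ref{thm:extension:Fano-case} and its divisorial generalization Theorem~\ref{thm:extension:general-case}) enters: one needs surjectivity of the restriction of sections
\[
\phi^i_* \O_{X^i}(m(K_{X^i}+D^i+X_0^i)) \twoheadrightarrow \bar\phi^i_* \O_{X_0^i}(m(K_{X_0^i}+D^i|_{X_0^i})),
\]
which together with relative Kawamata--Viehweg vanishing for $\mathcal{O}(m(K_{X^i}+D^i))$ (which kills the obstruction coming from tensoring the adjunction sequence with $\mathcal O(m X_0^i)$) gives the required isomorphism of graded algebras and hence $X_0^{i+1} = \Proj\text{(flip algebra on }X_0^i\text{)}$.

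Finally, one must check that the induction closes: $\Q$-factoriality is preserved by MMP steps (standard), the klt condition on $(X^{i+1},D^{i+1})$ and its restriction to $X_0^{i+1}$ is preserved (terminal on $X^i$ is preserved under flips/divisorial contractions, and canonical on $X_0^i$ is preserved by the induced step of the same type), surjectivity $N^1(X^{i+1})\to N^1(X_0^{i+1})$ follows from the compatibility of the two pictures and the fact that birational maps isomorphic in codimension one induce canonical isomorphisms of $N^1$, and the scaling divisor $D^{i+1}|_{X_0^{i+1}} - a K_{X_0^{i+1}}$ inherits the required positivity from $D^{i+1} - a K_{X^{i+1}}$ by restriction. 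This closes the induction and shows that the proper transform $X_0^i$ at every stage realizes the MMP of $(X_0, D|_{X_0})$ with the prescribed scaling.
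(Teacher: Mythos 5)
There is a genuine gap, and it sits exactly at the point the paper identifies as ``the main issue'': showing that a flipping (small) contraction $X^i \to Z^i$ over $T$ restricts to a \emph{small} contraction on $X_0^i$. You dispose of this by ``comparing dimensions of exceptional loci,'' asserting that a small contraction restricts to a small contraction; but that dimension count does not work. The exceptional locus of a small contraction of $X^i$ has dimension $\leq \dim X^i - 2 = \dim X_0^i - 1$, and it may be entirely contained in the central fiber (the Remark following the theorem, and Totaro's examples, show this actually happens: there are flipping contractions of $X$ over $T$ that are the identity on every general fiber $X_t$). So a priori the exceptional locus could be a divisor in $X_0^i$, making $X_0^i \to Z_0^i$ divisorial and breaking the claim that the induced step has the same type. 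The paper's argument here is completely different and is where the scaling is really used: if $X_0^i \to Z_0^i$ were divisorial, $Z_0^i$ would be $\Q$-factorial; the scaling of $D - aK_X$ guarantees $D^i|_{X_0^i} - aK_{X_0^i}$ is nef over $Z_0^i$ with $a > -1$, forcing $-K_{X_0^i}$ to be ample over $Z_0^i$ and hence $Z_0^i$ to be canonical; then \cite[Proposition~3.5]{dFH} lifts $\Q$-factoriality from the canonical $\Q$-factorial central fiber to $Z^i$ itself, contradicting the fact that the target of a flipping contraction is never $\Q$-factorial. Your proposal never invokes this mechanism, and without it the trichotomy of types does not close.

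A secondary problem is your identification of $X_0^{i+1}$ with the flip of $X_0^i \to Z_0^i$ via the extension theorem. Within this paper's logical architecture that is circular: Theorem~\ref{thm:extension:general-case} (and hence Theorem~\ref{thm:extension:Fano-case}) is \emph{deduced from} Theorem~\ref{thm:MMP-over-T}, not available as an input to it. It is also unnecessary: once one knows (by the argument above) that $X_0^{i+1} \to Z_0^i$ is small, the identification follows from the uniqueness of the flip as the small modification over $Z_0^i$ on which $K+D$ becomes relatively ample, with the relative ampleness of $(K_{X^{i+1}}+D^{i+1})|_{X_0^{i+1}}$ obtained by adjunction. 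The remaining bookkeeping in your induction (surjectivity of $N^1$, adjunction for negativity and extremality of the restricted ray, preservation of $\Q$-factoriality and of the klt/canonical conditions) is in the right spirit and consistent with what the paper does, but it does not substitute for the missing $\Q$-factoriality argument in the flipping case.
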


For a proof of this theorem, we refer the reader to \cite{dFH}
(specifically, see Theorem~4.1
and the proof of Theorem~4.5 {\it loc.cit}).
The key observation is that, by running a
Minimal Model Program  with scaling of $D - a K_{X}$,
we can ensure that the property that
$D|_{X_0} - a K_{X_0}$ is ample for some $a > -1$
is preserved after each step of the program.
By the semicontinuity of fiber
dimensions, it is easy to see that $X^i\dasharrow X^{i+1}$ is a Mori
fiber space if and only if so is $X^i_0\dasharrow X^{i+1}_0$.
If $X^i\dasharrow X^{i+1}$ is birational, then the main issue is to show that
if  $X^i\dasharrow X^{i+1}$ is a flip and $X^i\to Z^i$ is the corresponding
flipping contraction, then $X^i_0\to Z^i_0$ is also a flipping contraction.
If this were not the case, then $X^i_0\to Z^i_0$ would be a divisorial
contraction and hence $Z^i_0$ would be $\mathbb Q$-factorial.
Since $D^i|_{X_0^i}-aK_{X_0^i}$ is nef over $Z_0^i$, it follows that
$-K_{X_0^i}$
is ample over $Z_0^i$ and hence that $Z^i_0$ is canonical.
By \cite[Proposition~3.5]{dFH} it then follows that $Z^i$ is $\Q$-factorial.
This is the required contradiction as the target of a flipping
contraction is never  $\Q$-factorial.
Therefore it follows that $X^i\to Z^i$ is a flipping contraction if
and only if so is $X^i_0\to Z^i_0$.

\begin{rmk}
The theorem implies that
$X^i_0\dasharrow X^{i+1}_0$ is a divisorial contraction or a Mori fibration
if and only if so is $X^i_t\dasharrow X^{i+1}_t$ for general $t\in T$.
However, there exist flipping contractions  $X^i\dasharrow X^{i+1}$ which are
the identity on a general fiber $X^i_t$.
This follows from the examples of Totaro that we will
briefly sketch at the end of the section.
\end{rmk}

One of the main applications of this result is the following
extension theorem (cf. \cite[Theorem~4.5]{dFH}), which in particular implies the statement
of Theorem~\ref{thm:extension:Fano-case} in the case of families of Fano varieties.

\begin{thm}\label{thm:extension:general-case}
With the same notation as in Theorem~\ref{thm:MMP-over-T},
assume that either $D|_{X_0}$ or $K_{X_0} + D|_{X_0}$ is big.
Let $L$ be any Weil divisor whose support does not contain $X_0$
and such that $L \equiv k(K_X + D)|_{X_0}$ for some rational number $k > 1$.
Then the restriction map
$$
H^0(X,\O_X(L)) \to H^0(X_0,\O_{X_0}(L|_{X_0}))
$$
is surjective.
\end{thm}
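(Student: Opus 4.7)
The plan is to run a relative MMP over $T$ to reduce the extension problem to one on a good minimal model, where a vanishing theorem does the rest.

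First I would clear denominators so that $L$, $K_X+D$, and $D-aK_X$ are integral Weil divisors, working throughout with the associated reflexive rank-one sheaves on the normal varieties $X$ and $X_0$. Next I would run a $(K_X+D)$-MMP over $T$ with scaling of $D-aK_X$, using the hypothesis that $D|_{X_0}-aK_{X_0}$ is ample for some $a>-1$ to ensure that $D-aK_X$ is relatively ample in a neighborhood of $X_0$ and can serve as a scaling divisor. The bigness of $D|_{X_0}$ or of $K_{X_0}+D|_{X_0}$ forces termination, with output either a good minimal model $(X',D')$ over $T$, with $K_{X'}+D'$ semi-ample over $T$ by the relative base-point-free theorem, or a Mori fiber space. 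Theorem~\ref{thm:MMP-over-T} then ensures that every step restricts to a step of the same type on the MMP of $(X_0,D|_{X_0})$ with the parallel scaling, so that $X'_0$ is correspondingly a minimal model or Mori fiber space on the central-fiber side.

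The second step is to transfer the extension problem from $X$ to $X'$. All flips are isomorphisms in codimension one, and by Theorem~\ref{thm:MMP-over-T} each divisorial contraction either restricts trivially to $X_0$ or as a divisorial contraction there; combined with the reflexive-sheaf formalism on normal varieties, this yields canonical identifications $H^0(X,\O_X(L)) \cong H^0(X',\O_{X'}(L'))$ and $H^0(X_0,\O_{X_0}(L|_{X_0})) \cong H^0(X'_0,\O_{X'_0}(L'|_{X'_0}))$ compatible with the restriction maps, where $L'$ denotes the proper transform of $L$.

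Finally, on the minimal model we have $L'|_{X'_0} \equiv k(K_{X'_0}+D'|_{X'_0})$ with $K_{X'}+D'$ big, nef, and semi-ample over $T$ and $(X',D')$ klt. Since $X'_0$ is numerically trivial over $T$, the restriction $(L'-X'_0-(K_{X'}+D'))|_{X'_0} \equiv (k-1)(K_{X'_0}+D'|_{X'_0})$ is big and nef on $X'_0$, and a relative log Kawamata--Viehweg vanishing gives $R^1 f'_* \O_{X'}(L'-X'_0)=0$ in a neighborhood of $0$, hence $H^1(X',\O_{X'}(L'-X'_0))=0$ there. The long exact cohomology sequence associated to
\begin{equation*}
0 \to \O_{X'}(L'-X'_0) \to \O_{X'}(L') \to \O_{X'_0}(L'|_{X'_0}) \to 0
\end{equation*}
then yields the desired surjectivity; the Mori fiber space case is handled in the same spirit. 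The hardest part is this final vanishing step, since $L'$ need not be Cartier and the usual Kawamata--Viehweg does not immediately apply to reflexive sheaves; this is resolved either by passing to a small $\Q$-factorialization and invoking relative base-point-freeness for $K_{X'}+D'$, or by an analytic extension argument using asymptotic multiplier ideals in the spirit of Siu's invariance of plurigenera.
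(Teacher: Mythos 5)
The paper gives no proof of this statement---it simply cites \cite[Theorem~4.5]{dFH} and notes that the theorem is an application of Theorem~\ref{thm:MMP-over-T}. Your outline (run the relative $(K_X+D)$-MMP with scaling of $D-aK_X$, invoke Theorem~\ref{thm:MMP-over-T} to match each step with a step of the MMP on the central fiber, transfer sections of the reflexive sheaves across flips and divisorial contractions, and conclude by relative Kawamata--Viehweg vanishing on the resulting minimal model, with the Mori fiber space case degenerate since $k>1$) is precisely the strategy of that reference, so this is essentially the same approach; note only that the vanishing step is less delicate than you fear, since the MMP preserves $\Q$-factoriality and the Weil-divisor form of Kawamata--Viehweg applies directly to the $\Q$-Cartier divisor $L'-X'_0$.
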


There are versions of the above results where the condition on the
positivity of $D|_{X_0} - a K_{X_0}$ is replaced by the condition
that the stable base locus of $K_X + D$ does not contain any irreducible
component of $D|_{X_0}$ (cf. \cite[Theorem~4.5]{dFH}).
The advantage of the condition considered here is that it
only requires us to know something about the special fiber $X_0$.
This is a significant point, as
after all we are trying to lift geometric
properties from the special fiber to the whole space and nearby fibers
of an arbitrary flat deformation.

We now come back to the original setting, and hence
assume that $f \colon X \to T$ is a flat family of Fano varieties
with $\Q$-factorial terminal singularities.
After \'etale base change, we can assume that $N^1(X_t) \cong N^1(X/T)$
for every $t$.

Corollary~\ref{cor:PEff} implies, by duality,
that the cones of nef curves $\CNM(X_t)$ are constant in the family.
Combining this with Theorem~\ref{thm:MMP-over-T},
we obtain the following rigidity property of Mori fiber
structures.

\begin{thm}\label{thm:Mfs-over-T}
The birational Mori fiber structures $X_t\dasharrow X'_t\to Y'_t$ are locally
constant in the family $X\to T$.
\end{thm}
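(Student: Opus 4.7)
The plan is to lift every birational Mori fiber structure on a single fiber to a relative Mori fiber structure over $T$, and then specialize to nearby fibers. The central tool is Theorem~\ref{thm:MMP-over-T}: a relative MMP with scaling over $T$ restricts, step by step, to an MMP on the special fiber. After \'etale base change I may assume the monodromy is trivial, so $N^1(X_t)\cong N^1(X/T)$ and $N_1(X_t)\cong N_1(X/T)$ canonically. Combining Corollary~\ref{cor:PEff} with duality, the cones $\CNM(X_t)=\PEff(X_t)^\vee$ are locally constant, so each extremal ray of $\CNM(X_0)$ corresponds canonically to an extremal ray of $\CNM(X_t)$ for every $t$ near $0$.

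Fix such a ray $R$ and the associated birational Mori fiber structure $X_0\rat X'_0\to Y'_0$. By the characterization of extremal rays of $\CNM$ recalled in Section~\ref{sect:Mori}, choose a $\Q$-divisor $D_0$ on $X_0$ with $(X_0,D_0)$ klt such that a $(K_{X_0}+D_0)$-MMP with scaling terminates in this Mori fibration. Using the isomorphism $\Cl(X/T)\cong\Cl(X_0)$, lift $D_0$ to a $\Q$-Weil divisor $D$ on $X$ whose support does not contain $X_0$; after adding a small positive multiple of the $f$-ample class $-K_X$, arrange that $D|_{X_0}-aK_{X_0}$ is ample for some $a>-1$. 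Since terminal singularities deform and $(X_0,D|_{X_0})$ is klt, the pair $(X,D)$ is klt in a neighborhood of $X_0$. Now run the relative MMP for $(X,D)$ over $T$ with scaling of $D-aK_X$. By Theorem~\ref{thm:MMP-over-T}, each step is either trivial on the central fiber or induces a step of the same type in the MMP on $(X_0,D|_{X_0})$, with $X^i_0$ the proper transform of $X_0$ at every stage. Hence the induced sequence on $X_0$ is (up to steps trivial on $X_0$) the one fixed at the outset, and the relative program terminates in a relative Mori fiber space $X'\to Y'$ over $T$ whose central fiber is the prescribed $X'_0\to Y'_0$. Restricting to any nearby fiber $X_t$ then gives a birational Mori fiber structure $X_t\rat X'_t\to Y'_t$ whose extremal ray in $\CNM(X_t)$ coincides with $R$ under the canonical identification.

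The main obstacle is the lift of $D_0$ to $D$ while simultaneously retaining the klt condition, the condition that $X_0$ is not contained in $\Supp D$, and the scaling condition $D|_{X_0}-aK_{X_0}$ ample with $a>-1$. The first two follow from the isomorphism $\Cl(X/T)\cong\Cl(X_0)$ combined with a generic choice of Weil-divisor representative; the third is essentially free since $-K_X$ is $f$-ample and $a$ can always be chosen large. A secondary subtlety is independence from auxiliary choices: two boundaries cutting out the same ray $R$ produce relative Mori fibrations agreeing in codimension one, so the final structure $X\rat X'\to Y'$ depends only on $R$. Conversely, by local constancy of $\CNM(X_t)$, every extremal ray of $\CNM(X_t)$ arises from one of $\CNM(X_0)$, so every birational Mori fiber structure on $X_t$ is the restriction of a global one over $T$, which is the assertion of local constancy.
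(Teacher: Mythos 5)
Your overall strategy is the same as the paper's: start from the BCHM characterization of an extremal ray $R$ of $\CNM(X_0)$, lift the relevant data from $X_0$ to $X$, run a relative MMP over $T$, and use Theorem~\ref{thm:MMP-over-T} to see that it restricts to an MMP on $X_0$ terminating in the prescribed Mori fibration. The implementations differ: the paper lifts a \emph{general ample} class $A_0$ through $N^1(X/T)\cong N^1(X_0)$ and runs the $K_X$-MMP with scaling of $A$, whereas you lift the klt boundary $D_0$ through $\Cl(X/T)\cong\Cl(X_0)$ and run the $(K_X+D)$-MMP with scaling of $D-aK_X$.

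This difference is exactly where a genuine gap sits. The sentence ``Hence the induced sequence on $X_0$ is (up to steps trivial on $X_0$) the one fixed at the outset'' is not justified. The characterization of $R$ only provides \emph{some} $(K_{X_0}+D_0)$-MMP ending in the given Mori fibration; it does not say that an MMP with scaling of the specific class $D_0-aK_{X_0}$ does so, nor that such a program is unique. At a given step an MMP with scaling may admit several $(K+D+t_iH)$-trivial extremal rays, and different choices can terminate at different Mori fiber spaces, i.e.\ at different extremal rays of $\CNM(X_0)$. Theorem~\ref{thm:MMP-over-T} only tells you that the relative program induces \emph{an} MMP with scaling on $X_0$; without uniqueness you cannot conclude it is the one computing $R$. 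The paper resolves precisely this point by taking $A_0$ general in $N^1(X_0)$, so that at each step there is a unique $K_{X_0}+t_iA_0$-trivial extremal ray and the fiberwise program, hence its endpoint, is forced. Your scaling class $D_0-aK_{X_0}$ is pinned down by $D_0$ and $a$ and carries no genericity, so you would need to add such an argument or switch to the paper's choice of scaling divisor. Two smaller points: adding a ``small'' multiple of $-K_X$ to $D$ does not make $D|_{X_0}-aK_{X_0}$ ample (and modifying $D$ changes the pair whose MMP you are running); the right move is simply to take $a\gg 0$, which is allowed since only $a>-1$ is required. Also, Theorem~\ref{thm:MMP-over-T} asks for $(X_0,D|_{X_0})$ to be klt \emph{with canonical singularities}, which the $D_0$ supplied by the BCHM characterization need not satisfy --- another reason the paper's route through an ample scaling class, with no boundary to lift, is cleaner.
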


This result is implicit in \cite{dFH}. As it was not explicitly stated there,
we provide a proof.

\begin{proof}
Let $R_0$ be the extremal ray of $\CNM(X_0)$ corresponding to a given
birational Mori fiber structure on $X_0$.
Note that by \cite[1.3.5]{BCHM} and its
proof, there exists an ample $\mathbb R$-divisor $A_0$ such that the
$K_{X_0}$ Minimal Model Program with scaling of
$A_0$ say $X_0\dasharrow X'_0$ ends with
Mori fiber space $X'_0 \to Y'_0$ which is $(K_{X_0}+A_0)$-trivial.
Notice also that if we make a general choice of $A_0$ in $N^1(X_0)$, then each step
of this Minimal Model Program with scaling is uniquely determined since at each
step there is a unique $K_{X_0}+t_iA_0$ trivial extremal ray.

We may now assume that there is an ample $\mathbb R$-divisor $A$ on $X$ such
that $A_0=A|_{X_0}$. Consider running the $K_X$ Minimal
Model Program over $T$ with scaling of $A$ say $X\dasharrow X'$.
Since $X$ is uniruled, this ends with a Mori fiber space $X' \to Y'$.
By Theorem~\ref{thm:MMP-over-T}, this induces the
Minimal Model Program with scaling on the fiber $X_0$ considered
in the previous paragraph.
Moreover, the Minimal Model Program on $X$ terminates with the Mori fiber space
$X' \to Y'$ at the same step when the induced
Minimal Model Program on $X_0$ terminates with the Mori fiber space
$X_0' \to Y_0'$. This implies that the
the birational Mori fiber structure $X_0 \rat Y_0'$ extends to
the birational Mori fiber structure $X \rat Y'$, and thus deforms
to a birational Mori fiber structure on the nearby fibers.
\end{proof}

A similar application of Theorems~\ref{thm:MMP-over-T}
and~\ref{thm:extension:general-case} leads to
the following rigidity result for the cone of moving divisors
(cf. \cite{dFH}).

\begin{thm}
The moving cone $\Mov(X_t)$ of divisors is locally constant in the family.
\end{thm}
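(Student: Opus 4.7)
The plan is to realize each Mori chamber of $\Mov(X_0)$ as the pullback of the relative nef cone of a small $\Q$-factorial modification of $X$ over $T$, constructed via an MMP in families, and then to compare the chamber decompositions of $\Mov(X_0)$ and $\Mov(X_t)$. Combined with a symmetric argument, this yields the local constancy of $\Mov$. The approach parallels the proof of Theorem~\ref{thm:Mfs-over-T}, with small $\Q$-factorial modifications (SQMs) replacing Mori fiber spaces.

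\textbf{Lifting a chamber.} Since $X_0$ is a Mori Dream Space (Theorem~\ref{t-MDS}), $\Mov(X_0)$ decomposes into finitely many Mori chambers, each of the form $\phi_0^*\Nef(X'_0)$ for an SQM $\phi_0 \colon X_0 \rat X'_0$. Fix such a chamber, choose an ample rational class $A'_0$ on $X'_0$, and set $M_0 := \phi_0^*A'_0$. Since $M_0$ is big and $-K_{X_0}$ is ample, one can pick a general effective $\Q$-divisor $\Delta_0 \sim_\Q M_0 - K_{X_0}$ with $(X_0, \Delta_0)$ klt. For $a > -1$ sufficiently large, the class $\Delta_0 - aK_{X_0} \equiv M_0 - (1+a)K_{X_0}$ is ample, so the hypotheses of Theorem~\ref{thm:MMP-over-T} are satisfied. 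Lifting $\Delta_0$ to an effective $\Q$-Weil divisor $\Delta$ on $X$ not containing $X_0$ (using the isomorphism of class groups over $T$), the relative $(K_X+\Delta)$-MMP over $T$ with scaling of $\Delta - aK_X$ induces on $X_0$ the $(K_{X_0}+\Delta_0)$-MMP, which terminates at the minimal model of $K_{X_0}+\Delta_0 \sim_\Q M_0$, namely $X'_0$, via a sequence of flips. Hence the relative MMP terminates in a relative SQM $\phi \colon X \rat X'$ over $T$ restricting to $\phi_0$ on $X_0$ and to SQMs $\phi_t \colon X_t \rat X'_t$ on nearby fibers.

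\textbf{Matching chambers.} At termination, the class $M := \phi^*A'$ on $X/T$ --- where $A'$ on $X'/T$ lifts $A'_0$ via $\phi^* \colon N^1(X'/T) \cong N^1(X/T)$ together with Theorem~\ref{thm:N^1-const} --- has strict transform $A'$ on $X'$ relatively nef over $T$, so $A'|_{X'_t} \in \Nef(X'_t)$ for each $t$ near $0$. This gives $M_t \in \phi_t^*\Nef(X'_t) \subseteq \Mov(X_t)$. As $A'_0$ ranges over the interior of $\Nef(X'_0)$ and we take closures, the chamber $\phi_0^*\Nef(X'_0)$ embeds into $\phi_t^*\Nef(X'_t)$ under the identification $N^1(X_0) \cong N^1(X_t)$ of Theorem~\ref{thm:N^1-const}. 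Summing over the finitely many chambers of $\Mov(X_0)$ yields $\Mov(X_0) \subseteq \Mov(X_t)$, and the reverse inclusion follows by the symmetric construction starting from a general $X_{t_0}$, using that each $X_t$ satisfies identical hypotheses.

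\textbf{Main obstacle.} The principal subtlety lies in choosing $\Delta_0$ so that Theorem~\ref{thm:MMP-over-T} applies: one needs $(X_0, \Delta_0)$ klt while $\Delta_0 - aK_{X_0}$ is ample for some $a > -1$. Both can be arranged using the ampleness of $-K_{X_0}$, by taking $\Delta_0$ general with small coefficients in the class $M_0 - K_{X_0}$ and choosing $a$ large. A secondary technical point is the compatibility of the various $N^1$- and $\Cl$-identifications across $X/T$, $X'/T$, $X_t$, and $X'_t$; this is automatic because $\phi$ is a small modification over $T$, so $\phi^*$ is defined on divisor classes and commutes with restriction to fibers.
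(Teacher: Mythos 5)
Your forward inclusion $\Mov(X_0) \subseteq \Mov(X_t)$ is essentially sound and in the spirit of the paper's (very terse) argument: you lift each chamber $\phi_0^*\Nef(X'_0)$ by running a relative MMP for a klt pair $(X,\Delta)$ with $(K_X+\Delta)|_{X_0}\equiv M_0$, and Theorem~\ref{thm:MMP-over-T} guarantees that the steps restrict to flips or trivial maps on $X_0$, so the terminal model is a relative small modification whose fibers carry the nef transform of $M$. The gap is in the reverse inclusion. The ``symmetric construction starting from a general $X_{t_0}$'' anchors Theorem~\ref{thm:MMP-over-T} at $t_0$ and therefore only controls the restriction of the relative MMP to fibers near $t_0$; it yields $\Mov(X_{t_0}) \subseteq \Mov(X_s)$ for $s$ in a neighborhood of $t_0$, which need not contain $0$. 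The situation is genuinely not symmetric: as Totaro's examples discussed at the end of the paper show, a single chamber of $\Mov(X_{t_0})$ can break into several chambers of $\Mov(X_0)$, and the relative model constructed from a chamber of $\Mov(X_{t_0})$ may require additional flips that are nontrivial only on the special fiber. So one cannot simply interchange the roles of $0$ and $t_0$; one must check that the relative MMP anchored at $t_0$ also restricts to a flips-only MMP on $X_0$, and nothing in your write-up does this.

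The missing step is exactly the observation the paper's proof isolates: the boundary of $\Mov$ inside $\PEff$ is cut out by divisorial contractions, and for an extremal contraction $X\to Z$ over $T$ the property of being divisorial is detected on the central fiber (Theorem~\ref{thm:MMP-over-T}, in both directions). Concretely, to close your argument: choose $\Delta$ so that the hypotheses of Theorem~\ref{thm:MMP-over-T} hold simultaneously at $0$ and at $t_0$ (possible, since by the extension theorem $\Delta|_{X_0}$ and $\Delta|_{X_{t_0}}$ may be taken general in their linear systems); then any divisorial contraction occurring in the relative MMP would contract a horizontal divisor, which meets every fiber in a contracted divisor, and so would restrict to a divisorial contraction on $X_{t_0}$ --- contradicting the fact that the induced MMP on $X_{t_0}$ consists of flips. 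Hence the induced MMP on $X_0$ also consists of flips and trivial steps, $X_0 \rat X'_0$ is an isomorphism in codimension one, and $M_0 \in \Mov(X_0)$. Equivalently, one can argue as the paper does: $\PEff(X_0)=\PEff(X_t)$ by Corollary~\ref{cor:PEff}, and the chambers of $\PEff(X_0)$ lying outside $\Mov(X_0)$ correspond to birational contractions of $X_0$ that contract a divisor; these lift to contractions over $T$ contracting a horizontal divisor, hence restrict to divisorial contractions on $X_t$, so the corresponding chambers lie outside $\Mov(X_t)$ as well, which gives $\Mov(X_t)\subseteq\Mov(X_0)$.
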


\begin{proof}
The proof is similar to the proof of Theorem~\ref{thm:Mfs-over-T}
once we observe that the faces of ${\rm Mov}(X)$ are determined by
divisorial contractions and that given an extremal contraction $X\to Z$ over $T$,
this is divisorial if and only if the contraction on the central
fiber $X_0\to Z_0$ is divisorial.
\end{proof}

Regarding the behavior of the nef cone and, more generally,
of the Mori chamber decomposition of the moving cone,
the question becomes however much harder. In fact, once we allow even the
mildest singularities,
the rigidity of the
whole Mori structure only holds in small dimensions.
The following result was proven in \cite[Theorem~6.9]{dFH}.

\begin{thm}
With the above notation, assume that $X_0$ is either at most 3-dimensional,
or is 4-dimensional and Gorenstein.
Then the Mori chamber decomposition of $\Mov(X_t)$ is locally
constant for $t$ in a neighborhood of $0 \in T$.
\end{thm}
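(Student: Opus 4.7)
The plan is to establish a bijection between the Mori chambers of $\Mov(X_0)$ and those of $\Mov(X/T)$, compatible with the identification $N^1(X/T) \cong N^1(X_0)$ from Theorem~\ref{thm:N^1-const} and matching nef cones on both sides. Since the same argument applies at any $t$ near $0$, this yields the local constancy of the chamber decomposition of $\Mov(X_t)$.

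To each $\Q$-factorial small modification $X_0 \rat X_0'$ (an SQM) corresponding to a chamber of $\Mov(X_0)$ I associate a relative SQM $X \rat X'$ over $T$ as follows. Pick an $\R$-divisor $D_0$ in the interior of the chamber of $X_0'$ and lift it to a divisor $D$ on $X$ via the surjection in Theorem~\ref{thm:N^1-const}. Since $-K_{X_0}$ is ample, the positivity hypothesis of Theorem~\ref{thm:MMP-over-T} is satisfied for $a$ sufficiently large. Running the relative $(K_X+D)$ MMP over $T$ with appropriate scaling, Theorem~\ref{thm:MMP-over-T} ensures that each step is either trivial on the central fiber or induces a step of the same type on $(X_0,D_0)$. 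Because $D_0$ lies in the interior of a chamber of $\Mov(X_0)$, the induced MMP on $X_0$ is a sequence of flops terminating at $X_0'$; consequently the relative MMP on $X$ consists only of flips and terminates at an SQM $X \rat X'$ over $T$ whose central fiber realizes the chosen SQM of $X_0$. Using that every curve class on $X_0'$ represents a relative class in $N_1(X'/T) \cong N_1(X_0')$, one identifies $\Nef(X'/T)$ with $\Nef(X_0')$, so the chamber of $X_0'$ lifts to the chamber of $X'$. Conversely, by Theorem~\ref{thm:MMP-over-T} any relative SQM $X \rat X'$ over $T$ restricts to an SQM $X_0 \rat X_0'$ with matching nef cones.

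The main obstacle is the injectivity of the restriction at the level of chambers: two non-isomorphic relative SQMs $X \rat X'$ and $X \rat X''$ over $T$ cannot share the same central-fiber SQM. This is precisely the phenomenon illustrated by the Totaro examples cited in the remark following Theorem~\ref{thm:MMP-over-T}, where a flipping contraction over $T$ acts as the identity on every fiber and thus alters the SQM structure of $X$ over $T$ without touching $X_0$, breaking the bijection. The dimension hypothesis, namely $\dim X \leq 4$ or $\dim X = 5$ with $X$ Gorenstein in a neighborhood of $X_0$, is exactly what rules out this pathology: in these ranges the expected dimension of the exceptional locus of any small $\Q$-factorial contraction over $T$ is forced to meet $X_0$ in a positive-dimensional locus on which the flip acts non-trivially, using the classification of terminal (respectively Gorenstein terminal) small contractions in dimension three (respectively four). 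Consequently every flipping contraction over $T$ restricts to a non-trivial flipping contraction on $X_0$, giving the required injectivity.

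Combining the two assignments, the chamber structures of $\Mov(X_0)$ and $\Mov(X/T)$ agree under the natural identification. Applying the same reasoning at any nearby $t$ then gives the local constancy of the Mori chamber decomposition of $\Mov(X_t)$ in a neighborhood of $0 \in T$.
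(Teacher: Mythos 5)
The overall architecture of your argument---lift each chamber of $\Mov(X_0)$ to a relative model of $X$ over $T$ by running a relative MMP as in Theorem~\ref{thm:MMP-over-T}, and then identify the one thing that could break the correspondence of chambers---is reasonable (the paper itself gives no proof, citing \cite[Theorem~6.9]{dFH}). But your treatment of the crucial step rests on a misreading of the pathology, and as written it proves the wrong statement. In Totaro's examples the flipping contraction $X \to Z$ over $T$ is an isomorphism on every fiber $X_t$ with $t \neq 0$ and contracts a $\P^3$ \emph{contained in} $X_0$: it is nontrivial precisely on the central fiber, not ``the identity on every fiber \dots without touching $X_0$'' as you write. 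Consequently the claim you set out to establish---``every flipping contraction over $T$ restricts to a non-trivial flipping contraction on $X_0$''---is not the one needed: in the pathological case the flip \emph{does} restrict to a nontrivial flipping contraction on $X_0$, and that is exactly the problem, since it inserts a wall into $\Mov(X_0)$ (separating $\Nef(X_0)$ from $\Nef(X_0^+)$) that is invisible in $\Mov(X_t)$, so the central decomposition becomes strictly finer than the general one. What must be ruled out is a flipping contraction over $T$ whose exceptional locus fails to dominate $T$ (equivalently, after shrinking $T$, is contained in $X_0$); equivalently, one must show that every flip over $T$ is nontrivial on the \emph{general} fiber. Your ``injectivity'' should likewise be formulated for the passage from relative chambers to chambers of $\Mov(X_t)$ for general $t$, which is where Totaro's examples break it, not to chambers of $\Mov(X_0)$.

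This is exactly where the dimension hypotheses must enter, and your sketch does not engage with it. If $\Ex(X^i \to Z^i) \subset X^i_0$, then by Theorem~\ref{thm:MMP-over-T} the restriction to $X^i_0$ is a small contraction of the $n$-fold $X^i_0$ (where $n = \dim X_0$), so its exceptional locus has dimension at most $n-2$; on the other hand this same locus is the exceptional locus of a small extremal contraction of the $(n+1)$-fold $X^i$, and lower bounds of Ionescu--Wi\'sniewski type, $\dim E + \dim F \geq \dim X^i - 1 + \ell(R)$ with $\ell(R) > 0$ (and $\ell(R) \geq 1$ in the Gorenstein case), force $\dim E \geq n-1$ when $n = 3$, respectively $\dim E \geq 3 > n-2$ when $n = 4$ and $X_0$ is Gorenstein---a contradiction. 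An argument of this nature, bounding from below the dimension of the exceptional locus of a small extremal contraction of the total space and playing it against the smallness of the induced contraction of the fiber, is the actual mathematical content of the theorem; invoking ``the classification of terminal small contractions'' merely to conclude that the flip meets $X_0$ nontrivially does not do this job, and indeed in Totaro's five-dimensional central fibers a $3$-dimensional exceptional locus inside $X_0$ is perfectly consistent with all such bounds, which is why the theorem is sharp.
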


In \cite{Tot}, Totaro provides families of examples that show that
this result is optimal.
In particular he shows that there is a family of terminal $\mathbb Q$-factorial
Gorenstein Fano varieties $X\to T$ such that $X_t\cong \mathbb P ^3\times \mathbb P ^2$
for $t\ne 0$ and ${\rm Nef }(X_0)$ is strictly contained in
${\rm Nef }(X_t)$. The reason for this is that there is a flipping contraction
$X\to Z$ over $T$ which is an isomorphism on the general fiber $X_t$ but contracts a copy
of $\mathbb P ^3$ contained in $X_0$.
Let $X^+\to Z$ be the corresponding flip and fix $H^+$ a divisor on $X^+$ which is ample
over $T$. If $H$ is its strict transform on $X$, then $H$ is negative on flipping curves
and hence $H|_{X_0}$ is not ample, however $H|_{X_t}\cong H^+|_{X^+_t}$ is ample for
$t\ne 0$. Therefore, the nef cone of $X_0$ is strictly smaller than the nef cone of $X_t$
so that the Mori chamber decomposition of $\Mov(X_0)$ is finer than that of $\Mov (X_t)$.
The construction of this example starts from the flip
from the total space of $\O_{\P^3}(-1)^{\oplus 3}$
to the total space of $\O_{\P^2}(-1)^{\oplus 4}$.
The key idea is to interpret this local setting
in terms of linear algebra, and use such description
to compactify the setting into a family of Fano varieties.
Totaro also gives an example in dimension 4, where the generic
element of the family is isomorphic to the blow-up of $\P^4$
along a line, and the central fiber is a Fano variety with
$\Q$-factorial terminal singularities that is not Gorenstein.

\begin{rmk}
The fact that the Mori chamber decomposition is not in general locally
constant in families of Fano varieties with $\Q$-factorial terminal singularities
is not in contradiction with the flatness of Cox rings.
The point is that the flatness of such rings is to be understood only as modules,
but it gives no information on the ring structure. The changes in the Mori chamber
decomposition are related to jumps of
the kernels of the multiplication maps.
\end{rmk}

\providecommand{\bysame}{\leavevmode \hbox \o3em
{\hrulefill}\thinspace}

\end{document}